\newcommand{\blue}{\color{blue}}
\newcommand\gotX{{\mathfrak{X}}}
\newcommand\dR{{\mathbb{R}}}
\newcommand\dC{{\mathbb{C}}}
\newcommand\dN{{\mathbb{N}}}
\newcommand{\gb}{{\beta}}
\newcommand{\gd}{{\delta}}
\newcommand{\gD}{{\Delta}}
\newcommand{\go}{{\omega}}
\newcommand\gt{{\tau}}
\newcommand{\gT}{{\Theta}}
\newcommand{\e}{\varepsilon}
\newcommand\cA{{\mathcal{A}}}
\newcommand\cB{{\mathcal{B}}}
\newcommand\cC{{\mathcal{C}}}
\newcommand\cG{{\mathcal{G}}}
\newcommand\cI{{\mathcal{I}}}
\newcommand\cK{{\mathcal{K}}}
\newcommand\cL{{\mathcal{L}}}
\newcommand\cO{{\mathcal{O}}}
\newcommand\cU{{\mathcal{U}}}
\newcommand\1{{\mathds{1}}}
\newtheorem{hypotheses}[theorem]{Hypotheses}
\newcommand{\ba}{\begin{array}}
\newcommand{\ea}{\end{array}}
\newcommand{\bea}{\begin{eqnarray}}
\newcommand{\eea}{\end{eqnarray}}
\newcommand{\bead}{\begin{eqnarray*}}
\newcommand{\eead}{\end{eqnarray*}}
\newcommand{\be}{\begin{equation}}
\newcommand{\ee}{\end{equation}}
\newcommand{\bed}{\begin{displaymath}}
\newcommand{\eed}{\end{displaymath}}
\newcommand{\bt}{\begin{theorem}}
\newcommand{\et}{\end{theorem}}
\newcommand{\bl}{\begin{lemma}}
\newcommand{\el}{\end{lemma}}
\newcommand{\bc}{\begin{corollary}}
\newcommand{\ec}{\end{corollary}}
\newcommand{\br}{\begin{remark}}
\newcommand{\er}{\end{remark}}
\newcommand{\bd}{\begin{definition}}
\newcommand{\ed}{\end{definition}}
\newcommand{\bprop}{\begin{proposition}}
\newcommand{\eprop}{\end{proposition}}
\newcommand{\bh}{\begin{hypotheses}}
\newcommand{\eh}{\end{hypotheses}}
\newcommand{\bal}{\begin{aligned}}
\newcommand{\eal}{\end{aligned}}
\newcommand{\vertiii}[1]{{\left\vert\kern-0.25ex\left\vert\kern-0.25ex\left\vert #1
    \right\vert\kern-0.25ex\right\vert\kern-0.25ex\right\vert}}
\def\e{{\rm e\,}}
\def\dom{{\rm dom\,}}
\def\dom{{\rm dom\,}}
\DeclareMathOperator*{\esssup}{ess\,sup}
\spnewtheorem{theorem}{Theorem}[section]{\bfseries}{\itshape}
\spnewtheorem{lemma}[theorem]{Lemma}{\bfseries}{\itshape}
\spnewtheorem{definition}[theorem]{Definition}{\bfseries}{\itshape}
\spnewtheorem{proposition}[theorem]{Proposition}{\bfseries}{\itshape}
\spnewtheorem{corollary}[theorem]{Corollary}{\bfseries}{\itshape}
\spnewtheorem{example}[theorem]{Example}{\bfseries}{\itshape}
\spnewtheorem{remark}[theorem]{Remark}{\bfseries}{\itshape}
\newcommand{\cl}{\mathcal}
\newcommand{\C}{\mathbb{C}}
\begin{document}

\title{Operator-norm Trotter product formula on Banach spaces}

\author{Valentin A. Zagrebnov \\
CNRS-Universit\'{e} d'Aix-Marseille\\
Institut de Math\'{e}matiques de Marseille \\
CMI - Technop\^{o}le Ch\^{a}teau-Gombert \\
39, rue F. Joliot Curie, 13453 Marseille Cedex 13, France}



\hspace{1cm}\textbf{Operator-norm Trotter product formula on Banach spaces}

\vspace{0.5cm}

\hspace{3cm}\textbf{Valentin A. Zagrebnov}

\vspace{0.2cm}

\hspace{2cm} CNRS-Universit\'{e} d'Aix-Marseille

\hspace{2cm} Institut de Math\'{e}matiques de Marseille

\hspace{2cm}  CMI - Technop\^{o}le Ch\^{a}teau-Gombert

\hspace{2cm} 39, rue F. Joliot Curie

\hspace{2cm} 13453 Marseille Cedex 13, France

\vspace{1cm}

\hspace* {3.2cm} \textit{In memory of Academician Vasili{\v{\i}} Sergeevich Vladimirov} \\
\hspace* {6cm} \textit{on the 100$^{\rm th}$ anniversary of his birthday}

\section*{\blue{Abstract}}
In this paper we collect results concerning the \textit{operator-norm} convergent
\textit{Trotter} product formula for two semigroups $\{\e^{- t A}\}_{t\geq 0}$,
$\{\e^{- t B}\}_{t\geq 0}$, with densely defined generators $A$ and $B$ in a \textit{Banach} space.
Although the \textit{strong} convergence in Banach space for contraction semigroups is known
since the seminal paper by Trotter (1959),
which after more than three decades was extended to convergence in the \textit{operator-norm}
topology in \textit{Hilbert} spaces by Rogava (1993),
the {operator-norm} convergence in a \textit{Banach} space was established {{only in}} (2001).

For the first time this result was established under hypothesis that one of the involved
into the product formula contraction semigroups, e.g. $\{\e^{- t A}\}_{t\geq 0}\, $, is
\textit{holomorphic} together with certain conditions of \textit{smallness} on generators $B$ and
$B^*$ with respect to generators $A$ and $A^*$.
Note that in spite of a quite strong assumptions on operators $A$ and $B$ the proof of the
operator-norm convergent Trotter product formula on a Banach space is (unexpectedly) involved
and technical.

To elucidate the question of how far these conditions are from optimal ones we
show an Example of the \textit{operator-norm} convergent {Trotter}
product formula for two semigroups $\{\e^{- t A}\}_{t\geq 0}$ and $\{\e^{- t B}\}_{t\geq 0}$ on a
Banach space, where hypothesis on operator $A$ is relaxed to condition that $A$ is generator of
a \textit{contraction} semigroup.


\section{\blue{Preliminaries}}\label{sec:8.1}
\subsection{\blue{Bounded semigroups on $\mathfrak{X}$ }}\label{subsec:8.1.1}
For what follows the properties of holomorphic (contraction) semigroups on a Banach space
$\mathfrak{X}$ are essential. Therefore, we start by a suitable for our aim recall of details
concerning the bounded,
holomorphic semigroups, and fractional powers of their generators. We begin with definitions and
properties to introduce certain notations adapted in this section for semigroups on $\mathfrak{X}$.
\begin{definition}\label{def:8.1.1}
{\rm{We would remind that a family $\{U(t)\}_{t\geq 0}$ of bounded linear operators on a Banach
space $\mathfrak{X}$ is called a one-parameter strongly continuous semigroup if it satisfies the
conditions:\\
\begin{tabular}{rl}
(i) &$U(0)={1}$, \\
(ii) &$U(s+t) = U(s)U(t)$ for all $s,t\geq 0$, \\
(iii) &$\lim_{t\rightarrow +0} U(t)x = x$ for all $x\in{\mathfrak{X}}$.
\end{tabular}
}}
\end{definition}

We recall some immediate consequences of this definition:

\begin{itemize}
   \item There are constants $C_A\geq 1$ and $\gamma_A\in{\dR}$, depending on the generator of the
   semigroup,
   such that
   $\|U(t)\| \leq C_A \e^{\gamma_A t}$ for all $t\geq 0$.
   \item $t\mapsto U(t)$ is a strongly continuous function from $[0,+\infty)$ onto the algebra
   $\mathcal{L}(\mathfrak{X})$ of bounded linear operators on $\mathfrak{X}$.
   \item There exists a closed densely defined linear operator $A$ on $\mathfrak{X}$ with domain
   ${\rm{dom}}(A)$, called the generator
   of the semigroup, such that $\lim_{t\rightarrow +0}(U(t)x-x)/t = -Ax$ for any $x\in{\rm{dom}}(A)$,
   that is, by convention $U(t):=\e^{-tA}$.
   \item The resolvent of the generator satisfies the estimate
   $\|R_{A}(-\lambda)\|$ $=$  $\|(A+\lambda \mathds{1})^{-1}\|$ $\leq$
   $C_A / (\mbox{Re}(\lambda) - \gamma_A)$ for
  all $\lambda$ with
   $\mbox{Re}(\lambda)>\gamma_A$, thus the open half plane with $\mbox{Re}(z) < -\gamma_A$ is
   contained into the resolvent set of $A$, which is
   defined as $\rho(A) = \{z\in{\dC} : \|R_{A}(z)\| < +\infty \}$.
\item If $\gamma_A\leq 0$, $U(t)$ is called a bounded semigroup (otherwise, $U(t)$
is called a quasi-bounded semigroup of type $\gamma_A>0$). For any strongly continuous
semigroup, we can construct a bounded semigroup by adding some constant
$\eta\geq\gamma_A$ to its generator. Let $\tilde{A} = A+\eta \mathds{1}$, then for the semigroup
$\tilde{Q}(t)$ generated by $\tilde{A}$, one has $\|\tilde{Q}(t)\|\leq C_A$, $t\geq 0$,
and the open half-plane $\mbox{Re}(\lambda)<\eta-\gamma_A$ is included into the
resolvent set $\rho(\tilde{A})$ of $\tilde{A}$. So it is not restrictive to suppose
that the considered semigroup $U(t)$ is bounded and that the set $\{z\in{\dC}: \mbox{ Re}(z)\leq 0\}
\subseteq\rho(A)$.
\item If $\|U(t)\|\leq 1$, $t\geq 0$, the semigroup is called a contraction semigroup.
We comment that the method of the preceding remark does not permit to construct a
contraction semigroup from a bounded semigroup in general, since it can not change the value of the
constant $C_A$.
\end{itemize}

Below we need a characterization of generators of these contraction semigroups. First we
recall that the space of linear bounded functionals
${\mathfrak{X}}^* = \mathcal{L}(\mathfrak{X}, \mathbb{C})$ is a \textit{dual} of the
Banach space $\mathfrak{X}$ and that ${\mathfrak{X}}^*$ is itself a Banach space.
Recall that a linear operator $A$ in $\mathfrak{X}$ is accretive if for all pairs
$(u, \phi)\in{\rm{dom}}(A)\times{\mathfrak{X}}^*$ with $\|u\|_{\mathfrak{X}}=1,\
\|\phi\|_{\mathfrak{X}^*}=1, \ ( u, \phi ) =1$, one has $\mbox{Re} \, ( Au,\phi ) \geq 0$.
We also add that
a densely defined in $\mathfrak{X}$ accretive operator $A$ is generator of contraction semigroup
if the range of $\lambda \mathds{1} + A$ is $\mathfrak{X}$ for some $\lambda>0$,

Now we prove a series of estimates indispensable throughout this paper.
\bl\label{lem:8.1.2}
Let $U(t)$ be a bounded semigroup with boundedly invertible generator $A$, then for all $t\geq 0$,
and for any $n\in\dN$, we have:
\begin{equation}\label{eq:8.1.1}
\left(U(t)-\sum_{k=0}^{n}{\frac{(-tA)^k}{k!}}\right)A^{-n-1} = -\int_0^t d\tau \, \left(U(\tau)-
\sum_{k=0}^{n-1}{\frac{(-\tau A)^k}{k!}}\right) A^{-n} \, ,
\end{equation}
\begin{equation}\label{eq:8.1.2}
\left\|\, \left(U(t)-\sum_{k=0}^n{\frac{(-tA)^k}{k!}}\right)A^{-n-1}\right\|\ \leq\
C_A {\frac{t^{n+1}}{(n+1)!}}.
\end{equation}
\el
\begin{proof}
We proceed by induction, and we first prove that
\begin{equation}\label{eq:8.1.3}
(U(t)-\mathds{1})x = -\int_0^t d\tau \, U(\tau)\ A \, x \ , \ \  x\in{\rm{dom}}(A).
\end{equation}
Note that for any $\epsilon>0$ the semigroup properties yields the representation
\begin{eqnarray*}
\int_0^t ds \, U(s)\, {U(\epsilon)-\frac{\mathds{1}}{\epsilon}} & = &
\int_0^t ds \, {U(s+\epsilon) - \frac{U(s)}{\epsilon}} \\
& = & \int_t^{t+\epsilon}ds \, {\frac{U(s)}{\epsilon}} -
\int_0^\epsilon ds \, {\frac{U(s)}{\epsilon}} \\
& = & (U(t)-\mathds{1}) \, {\frac{1}{\epsilon}} \int_0^\epsilon ds \, U(s) \, .
\end{eqnarray*}
Moreover, one also gets:
\begin{equation*}
 \lim_{\epsilon\rightarrow 0} {\frac{1}{\epsilon}} \int_0^\epsilon  ds \ U(s)\ x  = x \ , \ \ \
 x\in{\mathfrak{X}} \ ,
\end{equation*}
\begin{equation*}
\lim_{\epsilon\rightarrow 0} \frac{U(\epsilon)-\mathds{1}}{\epsilon} \, x = - \, A \,x \ , \ \ \
 x\in{\rm{dom}}(A)\,.
\end{equation*}
This proves (\ref{eq:8.1.3}), and since $A$ is boundedly invertible, we obtain (\ref{eq:8.1.1})
for $n=0$.
Furthermore,
since $U(t)$ is bounded by $C_A$, we obtain the estimate (\ref{eq:8.1.2}) for $n=0$.

Suppose that (\ref{eq:8.1.1}) and (\ref{eq:8.1.2}) are true for some $n$, then a
straightforward calculation
leads to
(\ref{eq:8.1.1}) for $n+1$.
Hence, using the representation (\ref{eq:8.1.1}) and (\ref{eq:8.1.2}) for $n$ to estimate the
integrand, we obtain (\ref{eq:8.1.2}) for $n+1$, which completes the proof by induction.
\hfill $\square$
\end{proof}

Similarly, we obtain a representation for a restricted development of $(\mathds{1} + A)^{-1}$.
\bl\label{lem:8.1.3}
Let $A$ be as in Lemma \emph{\ref{lem:8.1.2}}. Then for any $n\geq 0$ :
\begin{equation}\label{eq:8.1.4}
(\mathds{1} + A)^{-1}A^{-n-1} = \left(\sum_{k=0}^n (-A)^k \right)A^{-n-1} +
(-1)^{n+1}(\mathds{1}+ A)^{-1}.
\end{equation}
\el
\begin{proof}
For $n=0$, the representation (\ref{eq:8.1.4}) follows from the resolvent formula:
\begin{equation}\label{eq:8.1.5}
(\mathds{1} + A)^{-1} - A^{-1} = -(\mathds{1} + A)^{-1}A^{-1}.
\end{equation}
Suppose that (\ref{eq:8.1.4}) holds for an integer $n > 1$, then:
\begin{equation}\label{eq:8.1.6}
(\mathds{1} + A)^{-1}A^{-n-2} = \left(\sum_{k=0}^n (-A)^k \right)A^{-n-2} +
(-1)^{n+1}(\mathds{1} + A)^{-1}A^{-1}.
\end{equation}
Applying (\ref{eq:8.1.5}) to the last term of (\ref{eq:8.1.6}) we get the representation
(\ref{eq:8.1.4}) for
$n+1$,
and thus for any $n$ by induction.
\hfill $\square$
\end{proof}

\bl\label{lem:8.1.4}
If $U(t)$ is a bounded semigroup with boundedly invertible generator $A$ then :
\begin{equation}\label{eq:8.1.7}
\left\|{\frac{1}{t^2}}\left((\mathds{1}+t\, A)^{-1}-U(t)\right)A^{-2}\right\|\ \leq\ 3C_A /2 \ ,
\ \quad  t>0.
\end{equation}
\el
\begin{proof}
By Lemma \ref{lem:8.1.2} one gets
\begin{equation}\label{eq:8.1.8}
\|(U(t)-\mathds{1}+ t \, A)\ \frac{1}{t^2} \, A^{-2}\|\ \leq\ {\frac{C_A }{2}} \ .
\end{equation}
On the other hand by Lemma \ref{lem:8.1.3}, we have
\begin{equation}\label{eq:8.1.9}
\left\|\left((\mathds{1} + t\,A)^{-1} - \mathds{1} +
t\,A \right) \ \frac{1}{t^2} \, A^{-2}\right\|\ = \ \|(\mathds{1} + t\,A)^{-1}\|\ \leq\ C_A \ .
\end{equation}
Here the last estimate follows from $(\mathds{1} + t\,A)^{-1} = (1/t) \, R_{A}(-1/t)$ and
$\|R_{A}(-\lambda)\|\leq C_A /(\lambda+\delta)$,
$\delta\geq 0$, which is valid for bounded semigroups with boundedly invertible generators.
Hence (\ref{eq:8.1.7}) follows from
(\ref{eq:8.1.8}) and (\ref{eq:8.1.9}).
\hfill $\square$
\end{proof}

\subsection{\blue{Holomorphic contraction semigroups on $\mathfrak{X} \ \ \ $}}\label{subsec:8.1.2}

Now let $U: z \mapsto U(z)$ be a family of operators with $z$ taking their values in the sector
of the complex plane:
\begin{equation}\label{sector-omega}
S_\theta = \left\{ z\in{\dC}:\ z\neq 0 \mbox{ and } |\arg(z)|<\theta\right\}
\end{equation}
where $0<\theta\leq\pi/2$.
\bd\label{def:8.1.5}
{\rm{Recall that the family of operators $\{U(z)\}_{z\in S_\theta}$ is a bounded holomorphic
semigroup of semi-angle $\theta\in (0, \pi/2]$ on a Banach space $\mathfrak{X}$ if it satisfies
the following conditions:}}\\
{\rm{
\begin{tabular}{rl}
(i) & If $0<\epsilon<\theta$, then  $\|U(z)\|\leq M_\epsilon$ for all $z\in S_{\theta-\epsilon}$ and
some $M_\epsilon<\infty$. \\
(ii) & $U(z_1)U(z_2) = U(z_1+z_2)$ for all $z_1,z_2\in S_\theta$.\\
(iii) & $U: z \mapsto U(z)$ is analytic function of $z\in S_\theta$. \\
(iv) & If  $x\in{\mathfrak{X}}$ and $0<\epsilon<\theta$, then $\lim_{z\rightarrow 0}U(z)\, x = x$
provided $z \in S_{\theta-\epsilon}$.
\end{tabular}
}}
\ed

Let $\sigma(A)={\dC}\setminus\rho(A)$ denote the spectrum of $A$. It can be used for the following
characterisation of the holomorphic semigroup generators, \cite{Kato95}, Chapter IX:
\bprop\label{prop:8.1.6}
Operator $A$ in a Banach space $\mathfrak{X}$ is the generator of a bounded holomorphic semigroup
of semi-angle $\theta \in (0, \pi/2]$ if and only if $A$ is a closed operator with a dense domain
${\rm{dom}}(A)$ such that:
\begin{equation*}
 \sigma(A) \subseteq \left\{ z \in{\dC},\ |\arg(z)|\leq
 {\pi\over 2}-\theta\right\} \ , \quad  0<\theta\leq{\frac{\pi}{2}} \ ,
\end{equation*}
and
\begin{equation*}\label{esres}
\|(z\,\mathds{1} +A)^{-1}\|\leq \frac{N_\epsilon }{|z|} \ \quad {\rm{for}}
\ \quad N_\epsilon > 0 \ , \
\  z \in S_{\theta+\pi/2 -\epsilon} \ \ .
\end{equation*}
where $0< \epsilon < \theta$.
\eprop

For applications it is also useful the following property, which is an alternative characterisation
of these kind of semigroups.
\bprop\label{prop:8.1.7}
If $U(z)$ is a bounded holomorphic semigroup of semi-angle $\theta$ with generator $A$, then for
all $z\in S_{\theta}$
and $n\in{\dN}$ one has $U(z){\mathfrak{X}}\subseteq{\rm{dom}}(A^n)$. Moreover, there are positive
constants $C_A'$, $C_A^{(n)}$ such that for $t>0$ \emph{:}
\begin{equation}\label{eq:8.1.11}
\left\|\frac{dU(t)}{dt}\right\| =  \|AU(t)\| \leq \frac{C_A'}{t} \ \ \  {\rm{and}} \ \ \
\left\|\frac{d^n U(t)}{dt^n}\right\| =
\left\|A^n U(t)\right\| \leq \frac{C_A^{(n)}}{t^n} \ .
\end{equation}
Let $0<\theta<\pi/2$. Then estimates \emph{(\ref{eq:8.1.11})} are valid for complex argument
$z \in S_\theta$ with constants depending on $\theta$.
\eprop
\br\label{rem:8.1.8}
{\rm{Similarly to strongly continuous semigroups, a family $U(z)$, $z\in S_\theta$ is called a
\textit{quasi-bounded} holomorphic semigroup of semi-angle $\theta$ if there exists a constant
$\beta >0$ such that {\blue{restriction of $\{\e^{-\beta \, z}\, U(z)\}_{z\in S_\theta}$ to
${\dR}^{+}_0$ is a bounded $C_0$-semigroup.}}
The class of semigroups that we consider here is restricted to \textit{holomorphic}
\textit{contraction} semigroups.}}
\er

To this aim we recall definition of this notion below, \cite{Kato95}, Chapter IX.
\bd \label{def:8.1.9}
{\rm{We say that $\{U_A(z)\}_{z\in S_\theta}$, is a \textit{holomorphic contraction} semigroup
with generator $A \in \mathscr{H}_{c}(\theta,0)$, {\blue{if its restriction
$\{U_A(t)\}_{t \geq 0}$ to ${\dR}^{+}_0$ is a contraction $C_0$-semigroup, that is
$\mathscr{H}_{c}(\theta, 0):= \mathscr{H}(\theta, 0) \cap \mathscr{G}(1,0)$.}}
}}
\ed

Note that this class of semigroups is \textit{not} empty and corresponding generators have the
following properties:\\
\smallskip
(i) Let $\{U_A(t)\}_{t \geq 0}$, be a \textit{contraction} semigroup with generator
$A\in \mathscr{G}(1,0)$ in a Banach space $\mathfrak{X}$, such that
$U_A(t){\mathfrak{X}}\subseteq{\rm{dom}}(A)$ for $t>0$. If $\|AU(t)\|\leq M_1 \, t^{-1}$ for some
$M_1 > 0$ and all $t>0$, then there exists $\theta = \arcsin\,(e M_1)^{-1} {(< \pi/2)}$
such that $U_A(t)$ may be analytically continued to contraction holomorphic semigroup
of semi-angle $\theta$.\\
(ii) Let $A$ be a sectorial operator in a Hilbert space $\mathfrak{H}$, i.e. its numerical range
$W= \{(Au,u): u\in{\rm{dom}}(A) \mbox{ and } \|u\|=1\}\subset S_{\pi/2-\theta}$ for
$0<\theta\leq \pi/2$.
If the operator $A$ is closed, then it is generator of the holomorphic contraction semigroup of
semi-angle $\theta$.
\\
(iii) Let $A$ be a generator of holomorphic semigroup on a Banach space $\mathfrak{X}$. If $A$ is
accretive, then $A$ generates a holomorphic contraction semigroup.
\\
(iv) {\blue{If $A$ is the generator of a strongly continuous \textit{group} $\{U_A(t)\}_{t \in \dR}$
of contractions $\|U_A(t)\| \leq 1$, then $\pm A \in \mathscr{G}(1,0)$, and
$A^2 \in \mathscr{H}_{c}(\pi/2,0)$, \cite{EN00}, Corollary II.4.9.
}}

\subsection{\blue{Fractional powers of generators}}\label{subsec:8.1.3}
We scrutinise in this section some properties of fractional powers of the generators for
bounded semigroups in a Banach space, see, e.g., \cite{Yos80}, Chapter IX.

Recall that fractional power $A^{\alpha}$, $0<\alpha<1$, of generator of a bounded $C_0$-semigroup
$U(t)$ ($\|U(t)\| \leq C_A$) can be expressed by the integral ({\blue{when it is well-defined}}):
\begin{equation}\label{eq:8.1.12}
A^{\alpha}x = {1\over \Gamma(-\alpha)}\int_0^\infty  d\lambda \, \lambda^{-\alpha-1} (U(\lambda) -
\mathds{1}) \ x  \ , \quad x\in \dom(A) ,
\end{equation}
where $\Gamma(\cdot)$ is the Gamma-function and $\lambda^{\alpha}$ is chosen to be positive for
$\lambda>0$. Since for any $x\in{\rm{dom}}(A)$ and $0 < \alpha \leq 1$ the integral (\ref{eq:8.1.12})
is convergent, ${\rm{dom}}(A)\subseteq{\rm{dom}}(A^\alpha)$. We set $A^0:=\mathds{1}$ and define
$A^\alpha=A^{\alpha-[\alpha]}A^{[\alpha]}$ for any $\alpha>0 \ $, where
$[\alpha]$ denotes the \textit{integer} part of $\alpha$,
\bprop\label{prop:8.1.10}
For each $\alpha\in [0,1]$, there exists a constant $C_{A,\alpha}$, depending only on $C_A$ and
$\alpha$, such that for all $\mu>0$,
\begin{equation}\label{eq:8.1.13}
\left\|A^{\alpha}(A+\mu \mathds{1})^{-1}\right\| \leq {C_{A,\alpha} \over\mu^{1-\alpha}}.
\end{equation}
\eprop
\begin{proof}
For $\alpha=0$ or $\alpha=1$, the result follows directly from the estimate of the resolvent.
Let $0<\alpha<1$ and
$x\in{\mathfrak{X}}$. Note that $\mbox{ran}(A+\mu \mathds{1})^{-1} =
{\rm{dom}}(A) \subseteq {\rm{dom}}(A^\alpha)$. Then
\begin{equation}\label{eq:8.1.14}
A^{\alpha}(A+\mu \mathds{1})^{-1}\ x =
{1\over \Gamma(-\alpha)}\int_0^\infty d\lambda \ \lambda^{-\alpha-1} (U(\lambda) - \mathds{1}))
(A + \mu \mathds{1})^{-1}\, x \ .
\end{equation}
We divide the integral (\ref{eq:8.1.14}) into two parts: $0<\lambda\leq\mu^{-1}$ and
$\lambda>\mu^{-1}$, and we use the
representation (\ref{eq:8.1.3}):
\begin{eqnarray*}
A^{\alpha}(A+\mu \mathds{1})^{-1}x & = & {1\over \Gamma(-\alpha)}\int_0^{\mu^{-1}} d\lambda \
\lambda^{-\alpha-1}
\int_0^\lambda dt \ (-U(t)) (\mathds{1}-\mu(A+\mu \mathds{1})^{-1})x \\
& & +\ {1\over \Gamma(-\alpha)}\int_{\mu^{-1}}^\infty d\lambda \ \lambda^{-\alpha-1}(U(\lambda) -
\mathds{1})
(A + \mu \mathds{1})^{-1}\ x \ .
\end{eqnarray*}
Now by the estimate of the resolvent $\|(A+\mu )^{-1}\|\leq C_A/\mu$ for all $\mu>0$ one obtains:
\begin{eqnarray*}
\|A^{\alpha}(A+\mu \mathds{1})^{-1}x\| & \leq & {C_A(1+C_A)\ \|x\|\over \Gamma(-\alpha)}
\left(\int_0^{\mu^{-1}} d\lambda \
\lambda^{-\alpha}  + {1\over\mu}\int_{\mu^{-1}}^\infty d\lambda \ \lambda^{-\alpha-1}  \right)\\
& \leq & \frac{C_A(1+C_A)\mu^{\alpha-1}}{\alpha(1-\alpha)\Gamma(-\alpha)} \ \|x\|.
\end{eqnarray*}
Setting $C_{A,\alpha} := C_A(1+C_A)/(\alpha(1-\alpha)\Gamma(-\alpha))$ we obtain the estimate
(\ref{eq:8.1.13}).
\hfill $\square$ \end{proof}

Next we recall the following well-known property of the semigroup generator $A$:
\bl\label{lem:8.1.11}
${\rm{dom}}((A+\delta \mathds{1})^\alpha) = {\rm{dom}}(A^\alpha)$ for all $\delta>0$ and $0<\alpha<1$.
\el
\bt\label{th:8.1.12}
Let $U_A(t)$ be a bounded holomorphic semigroup with generator $A$, then for any real $\alpha>0$,
we have
\begin{equation}\label{eq:8.1.15}
\sup_{t>0} \left\| t^\alpha A^\alpha U_A(t) \right\| = M_\alpha < \infty.
\end{equation}
\et
\begin{proof}
Let $0<\alpha<1$. By ${\rm{dom}}(A)\subseteq {\rm{dom}}(A^\alpha)$ one gets
${\rm{dom}}(A^\alpha U_A(t)) = {\mathfrak{X}}$.
Hence by (\ref{eq:8.1.12}) we have
\begin{equation}\label{eq:8.1.16}
A^\alpha U_A(t) = {1\over \Gamma(-\alpha)}\int_0^\infty d\lambda \ \lambda^{-\alpha-1}
(U_A(t+\lambda) - U_A(t)) \ .
\end{equation}
Now we split the integral (\ref{eq:8.1.16}) in two parts: $0< \lambda \leq t$ and $\lambda>t$,
and we use the estimate of the derivative of the holomorphic semigroup
(see Proposition \ref{prop:8.1.7}) to obtain
\begin{equation}
\|U_A(t+\lambda) - U_A(t)\| \leq \lambda\sup_{t\leq\tau\leq t+\lambda} \|\, \partial_{\tau}U_A(\tau)\|
\leq \lambda \ {C_A'\over t}.
\end{equation}
This leads to the estimate
\begin{eqnarray*}
\|A^\alpha U_A(t)\| & \leq & {1\over \Gamma(-\alpha)}\left(\int_0^t d\lambda \
\lambda^{-\alpha}\,{C_A'\over t} +
\int_t^\infty d\lambda \ 2\, C_A \, \lambda^{-\alpha-1}\right) \\
& \leq & {t^{-\alpha}\over \Gamma(-\alpha)}\left({C_A'\over 1-\alpha} + {2\, C_A\over\alpha}\right).
\end{eqnarray*}
Therefore one obtains (\ref{eq:8.1.15}) for $0<\alpha<1$ by setting $M_\alpha :=
\Gamma(-\alpha)^{-1}({C_A'/(1-\alpha)}$ $+$ ${2\, C_A/\alpha})$.

For integer powers $\alpha$, (\ref{eq:8.1.15}) follows directly from Proposition \ref{prop:8.1.7}.
Notice that by Proposition \ref{prop:8.1.7} $\mbox{ran}(U_A(t))\subseteq {\rm{dom}}(A^n)$ for $t>0$.
Then result
(\ref{eq:8.1.15}) follows for any non integer $\alpha>1$, from:
the observation that ${\rm{dom}}(A^\alpha = A^{\alpha-[\alpha]}A^{[\alpha]}) \supset
{\rm{dom}}(A^{[\alpha]+1})$, the representation (\ref{eq:8.1.16}), and the estimate (\ref{eq:8.1.11})
of derivatives of order $[\alpha]+1$.
\hfill $\square$ \end{proof}

\section{\blue{Operator-norm Trotter product formula}}\label{sec:8.2}
\subsection{\blue{Perturbation of holomorphic contraction semigroups}} \label{subsec:8.2.1}
In this section we prove the operator-norm convergence of the Trotter product formula in Banach space
$\mathfrak{X}$. Note that convergence of the abstract version of this formula in the strong
operator topology is known since \cite{Trot59}.
The proof goes via estimate of the rate of convergence in the case of \textit{small}
perturbations of the holomorphic contraction semigroups, cf. \cite{CZ01}.

We require that operator $A$ generates a holomorphic contraction semigroup, and that
perturbation $B$ satisfies the following hypothesis:

\smallskip
\noindent
(H1) $B$ is generator of a contraction semigroup on $\mathfrak{X}$.

\smallskip
\noindent
(H2) There is a real $\alpha\in[\, 0,1)$ such that ${\rm{dom}}(A^{\alpha})\subseteq{\rm{dom}}(B)$
and that
${\rm{dom}}(A^*)\subseteq{\rm{dom}}(B^*)$ adjoint operator in the dual space $\mathfrak{X}^*$.

\smallskip

Notice that we can suppose the operator $A$ boundedly invertible. If it is not the case, one considers
$A+\eta$ for some $\eta>0$. Then
by Lemma \ref{lem:8.1.11}  we have ${\rm{dom}}((A+\eta)^\alpha) =
{\rm{dom}}(A^\alpha) \subseteq {\rm{dom}}(B)$ .

\br \label{rem:8.2.1}
\rm{We note that assumption (H2) implies that $B$ is relatively bounded with respect to $A$
with the relative bound equals to \textit{zero}. Indeed, for $\eta>0$ by
${\rm{dom}}(A+\eta \mathds{1})\subseteq{\rm{dom}}(A^{\alpha})\subseteq{\rm{dom}}(B)$ and by
Proposition \ref{prop:8.1.10}, (\ref{eq:8.1.13}), one gets (here operator $A$ is supposed to be
boundedly invertible):
\begin{equation}\label{eq:8.2.1}
\|B\,(A+\eta \mathds{1})^{-1}\| \leq \|BA^{-\alpha}\| \, \|A^{\alpha}(A+\eta \mathds{1})^{-1}\| \leq
{C_\alpha\over\eta^{1-\alpha}}\, \|BA^{-\alpha}\|.
\end{equation}
Since the operators $A^{\alpha}$ and $B$ are closed, the inclusions in (H2) are equivalent to
$A^{\alpha}$-boundedness of
$B$ and the $A^*$-boundedness of $B^*$. In particular, $\|BA^{-\alpha}\|\leq d$ and
$\|B^*{A^{*}}^{-1}\|\leq d'$ for some $d,d'>0$.
Therefore, for any $x\in{\rm{dom}}(A)\subseteq{\rm{dom}}(B)$, we have the estimate
\begin{equation}\label{eq:8.2.2}
\|Bx\|\leq {C_\alpha \ d \over\eta^{1-\alpha}} \, \|Ax\| + \eta^\alpha C_\alpha \, d \ \|x\|
\end{equation}
and the relative bound in (\ref{eq:8.2.2}) can be made infinitesimally small for the large
enough shift parameter $\eta>0$.
}
\er

For this class of (small) perturbations of holomorphic contraction semigroup with generator
$A \in \mathscr{H}_{c}(\theta, 0)$ (Definition \ref{def:8.1.9}) one gets the following result:
\bl\label{lem:8.2.2}
Let $\{\e^{-zA}\}_{z \, \in \, S_\theta}$ be a holomorphic contraction semigroup of semi-angle
$\theta$ on $\mathfrak{X}$ and perturbation $B$ satisfy the hypothesis \emph{(H1)} and \emph{(H2)}.
Then the algebraic sum $A+B$ of operators defined on ${\rm{dom}}(A+B) = {\rm{dom}}(A)$
is also a generator of holomorphic contraction semigroup with the same semi-angle, that is,
$(A + B) \in \mathscr{H}_{c}(\theta, 0)$.
\el
\begin{proof}
To this end we verify conditions of Proposition \ref{prop:8.1.6}. Let $\epsilon \in (0,\theta)$.
Then by (\ref{eq:8.2.2}) we obtain inequality
\begin{equation*}
\|B\,(A+z \mathds{1})^{-1}\| \leq {C_\alpha \, \|B\,A^{-\alpha}\|\over \eta^{1-\alpha}}
\|A\,(A+z \mathds{1})^{-1}\| +
\eta^\alpha C_\alpha \, \|B\,A^{-\alpha}\| \, \|(A+z \mathds{1})^{-1}\| \ ,
\end{equation*}
for $|\arg(z)|<\theta + \pi/2 -\epsilon$. Seeing that
$\|(z\,\mathds{1} +A)^{-1}\|\leq N_\epsilon \, |z|^{-1}$ {for}
$N_\epsilon > 0$ and $z \in S_{\theta + \pi/2 -\epsilon} \, $, this inequality leads to
\begin{equation}\label{eq:8.2.3}
\|B\,(A+z \mathds{1})^{-1}\| \leq {C_\alpha\|B\,A^{-\alpha}\|\over \eta^{1-\alpha}} \,
(1+N_\epsilon) +
\eta^\alpha C_\alpha \, \|B\,A^{-\alpha}\| \ {N_\epsilon\over |z|}\, ,
\end{equation}
for $z \in S_{\theta + \pi/2 -\epsilon}\, $.
Therefore, the Neumann series for $(A+B+z \mathds{1})^{-1}$ converges if the right hand side of
(\ref{eq:8.2.3}) is smaller than $1$. Since we can choose $\eta$ and
$z \in S_{\theta + \pi/2 -\epsilon}$ such that the right-hand side of the
estimate (\ref{eq:8.2.3}) becomes smaller than $1$, we obtain:
\begin{equation*}
\|(A+B+z \mathds{1})^{-1}\| \leq {M\over |z-\gamma|} \, .
\end{equation*}
Here $M$ and $\gamma$ are some positive constants.
Then by Proposition \ref{prop:8.1.6} we conclude that operator
$(A+B) \in \mathscr{H}(\theta, \gamma)$,
that is, it generates a \textit{quasi-bounded} holomorphic semigroup
$\{U_{A+B}(z)\}_{z\in S_{\theta}}$.

On the other hand, the conditions of lemma imply that $A$ and $B$ are \textit{accretive}, thus
operator $A+B$ is also accretive. Since for $\lambda <0$, and $|\lambda|$ sufficiently large
($|\lambda|>\gamma$), the point $\lambda$ is in the resolvent set $\rho(A+B)$, we conclude
that $(A+B)\in \mathscr{G}(1,0)$ generates a contraction semigroup.

Since by $(A+B) \in \mathscr{H}(\theta, \gamma)$ this semigroup is also holomorphic, one
finally obtains the assertion.
\hfill $\square$
\end{proof}

The proof of the main theorem of this section involves three technical lemmata. For the two of
them we need \textit{only} that $B$ ($B^*$) are $A$($A^*$)-bounded in the Kato sense, i.e., there
are positive constants $a$ and $b$ such that:
\begin{eqnarray}
& &  x\in{\rm{dom}}(A)\subseteq{\rm{dom}}(B),\ \|Bx\|\leq a\|Ax\| + b\|x\|, \label{eq:8.2.4}\\
& &  \phi\in{\rm{dom}}(A^*)\subseteq{\rm{dom}}(B^*),\ \|B^*\phi\|\leq a\|A^*\phi\| + b\|\phi\|.
\label{eq:8.2.5}
\end{eqnarray}
If $A$ is boundedly invertible, then we can put $b=0$ with the relative bound $a + b\|A^{-1}\|$
instead of $a$.
\bl\label{lem:8.2.3}
Let boundedly invertible $A$ and operator $B$ be generators of bounded semigroups. Let $B$ and $B^*$
verify \emph{(\ref{eq:8.2.4}), (\ref{eq:8.2.5})} and suppose that operator $H=(A+B)$  with
${\rm{dom}}(H)={\rm{dom}}(A)$ is
the boundedly
invertible generator of a bounded semigroup. Then there exists constant $L_1$ such that for all
$\tau\geq 0$ :
\begin{eqnarray}
\left\|A^{-1}\left(\e^{-\tau B}\e^{-\tau A} - \e^{-\tau(A+B)}\right)\right\| & \leq &  L_1\tau,
\label{eq:8.2.6}\\
\left\|\left(\e^{-\tau B}\e^{-\tau A} - \e^{-\tau(A+B)}\right)A^{-1}\right\| & \leq &  L_1\tau.
\label{eq:8.2.7}
\end{eqnarray}
\el
\begin{proof}
By virtue of the identity
\begin{eqnarray*}
& & A^{-1}\left(\e^{-\tau B}\e^{-\tau A} - \e^{-\tau(A+B)}\right) = A^{-1}\left(\e^{-\tau B} -
\mathds{1}\right)\e^{-\tau A} \\
& & \hspace{4.0cm} + \  A^{-1}\left(\e^{-\tau A}-\mathds{1}\right) + A^{-1}HH^{-1}\left(\mathds{1} -
\e^{-\tau H}\right)
\end{eqnarray*}
and by Lemma \ref{lem:8.1.2} we get (\ref{eq:8.2.6}):
\begin{eqnarray*}
\left\|A^{-1}\left(\e^{-\tau B}\e^{-\tau A}-\e^{-\tau H}\right)\right\|
& \leq & \left\|\int_0^\tau ds \, A^{-1}B \, \e^{-sB}\right\| \\
& & \hspace{-2 cm} + \left\|A^{-1}\left(\e^{-\tau A}-\mathds{1}\right)\right\| +
\|A^{-1}H\| \, \left\|H^{-1} \left(\mathds{1} -
\e^{-\tau H}\right)\right\| \\
& \leq & \|A^{-1}B\|C_B\tau + C_A\tau + \|A^{-1}H\|C_H\tau \, ,
\end{eqnarray*}
where coefficients $C_B$ and $C_H$ are defined similarly  to $C_A$ in Subsection \ref{subsec:8.1.1}.

Finally we remark that (\ref{eq:8.2.5}) implies the boundedness of the closed operator $A^{-1}B$,
and that $\|A^{-1}H\|\leq \|\mathds{1}+ A^{-1}B\|\leq 1+a + b\|A^{-1}\|$. To prove (\ref{eq:8.2.7})
one has to use (\ref{eq:8.2.4}), and the same line of reasoning as above to put finally
$L_1 =C_B a' + C_A + C_H (1+a')$ where $a'=a+b\|A^{-1}\|$.
\hfill $\square$
\end{proof}
\bl\label{lem:8.2.4}
Let $A$, $B$ and $H=A+B$ be the same as in Lemma \emph{\ref{lem:8.2.3}}. Then there exists a
constant $L_2$ such that for all $\tau\geq 0$ :
\begin{eqnarray}
\left\|A^{-1}\left(\e^{-\tau B}\e^{-\tau A} - \e^{-\tau(A+B)}\right)A^{-1}\right\| & \leq & L_2\tau^2,
\label{eq:8.2.8}\\
\left\|A^{-1}\left(\e^{-\tau A}\e^{-\tau B} - \e^{-\tau(A+B)}\right)A^{-1}\right\| & \leq & L_2\tau^2.
\label{eq:8.2.9}
\end{eqnarray}
\el
\begin{proof}
By virtue of
\begin{eqnarray*}
& & \e^{-\tau B}\e^{-\tau A}-\e^{-\tau H} = \left(\mathds{1}-\e^{-\tau B}\right)
\left(\mathds{1}-\e^{-\tau A}\right) +
\left(\e^{-\tau A} - (\mathds{1} + \tau A)^{-1}\right) \\
& & \hspace{1.5cm}  + \left(\e^{-\tau B} - (\mathds{1} + \tau B)^{-1}\right) + \left((\mathds{1} +
\tau H)^{-1} -
\e^{-\tau H}\right) \\
& & \hspace{1.5cm} + \tau H(\mathds{1} + \tau H)^{-1} - \tau A(\mathds{1} + \tau A)^{-1} -
\tau B(\mathds{1} +
\tau B)^{-1}
\end{eqnarray*}
and by identity
\begin{eqnarray*}
& & A^{-1}\left(\tau H(\mathds{1} + \tau H)^{-1} - \tau A(\mathds{1} + \tau A)^{-1} -
\tau B(\mathds{1} +
\tau B)^{-1}\right)A^{-1} \\
& = & \tau^2\left((\mathds{1} +\tau A)^{-1} + A^{-1}B(\mathds{1} +\tau B)^{-1}BA^{-1} -
A^{-1}H(\mathds{1} +
\tau H)^{-1}HA^{-1}\right),
\end{eqnarray*}
we obtain the representation
\begin{eqnarray*}
& & A^{-1}(\e^{-\tau B}\e^{-\tau A}-\e^{-\tau H})A^{-1} = \\
& & \hspace{1.0cm}A^{-1}(\mathds{1} - \e^{-\tau B})(\mathds{1} - \e^{-\tau A})A^{-1} \\
& & \hspace{1.0cm} + \left(\e^{-\tau A} - (\mathds{1} + \tau A)^{-1}\right)A^{-2} +
A^{-1}\left(\e^{-\tau B} -
(\mathds{1} +\tau B)^{-1}\right)A^{-1} \\
& & \hspace{1.0cm}+A^{-1}H\left((\mathds{1} + \tau H)^{-1} - \e^{-\tau H}\right)H^{-2}HA^{-1} +
\tau^2(\mathds{1} +
\tau A)^{-1} \\
& & \hspace{1.0cm}+ \tau^2A^{-1}B(\mathds{1} + \tau B)^{-1}BA^{-1} - \tau^2A^{-1}H(\mathds{1} +
\tau H)^{-1}HA^{-1} \ .
\end{eqnarray*}
This presentation yields the following estimate:
\begin{eqnarray*}
& & {1\over\tau^2}\,\left\|A^{-1}(\e^{-\tau B}\e^{-\tau A}-\e^{-\tau H})A^{-1}\right\|\ \leq \\
& & \hspace{2cm}{1\over\tau}\,\|A^{-1}B\|\left\|\int_0^\tau ds \ \e^{-sB}\right\| \,
{1\over\tau}\, \left\| (\mathds{1} - \e^{-\tau A})A^{-1}\right\| \\
& & \hspace{2cm} + {1\over\tau^2}\, \left\|\left(\e^{-\tau A} - (\mathds{1} +
\tau A)^{-1}\right)A^{-2}\right\|  \\
& & \hspace{2cm} + {1\over\tau^2}\, \|A^{-1}B\|\left\|\int_0^\tau ds_1 \int_0^{s_1} ds_2 \
\e^{-s_2 B} - \tau^2 \, (\mathds{1}+\tau B)^{-1}\right\| \|BA^{-1}\| \\
& & \hspace{2cm} + {1\over\tau^2} \, \|A^{-1}H\|\left\|\left((\mathds{1} + \tau H)^{-1} -
\e^{-\tau H}\right)H^{-2}\right\| \|HA^{-1}\| \\
& & \hspace{2cm} + 1 + \|A^{-1}B\|\, \|BA^{-1}\| + \|A^{-1}H\|\, \|HA^{-1}\|.
\end{eqnarray*}
Now Lemmata \ref{lem:8.1.2} and \ref{lem:8.1.4} together with (\ref{eq:8.2.4}), (\ref{eq:8.2.5}) imply
(\ref{eq:8.2.8}),
where we can take $L_2 = a'C_A C_B + {3C_A/ 2} + 3C_B a'^2/2 + 3C_H (1+a')^2/2 + 1 + a'^2 +
(1+a')^2$ with
$a'=a+b\|A^{-1}\|$.
Similarly one obtains (\ref{eq:8.2.9}).
\hfill $\square$
\end{proof}

Note that for the proof of the third lemma (Lemma \ref{lem:8.2.4}) we do need \textit{conditions}
(H2), as well as requirement that semigroups are \textit{contractive}.

\bl\label{lem:8.2.5}
Let $A$ be a boundedly invertible generator of holomorphic contraction semigroup. If $B$ is
generator of a
contraction semigroup and there exists $\alpha\in [0,1)$ such that
${\rm{dom}}(A^{\alpha})\subseteq{\rm{dom}}(B)$,
then
for any $k\geq 1$ and $\tau>0$ one gets the estimates
\begin{eqnarray}
\left\|\left(\e^{-\tau B}\e^{-\tau A}\right)^k A\right\| & \leq & {L_3\over\tau^{\alpha}} +
{C_A'\over k\tau}\ ,
\ \ \ \ \ \
 \ \ \ \ \ \ \ \ \ {\rm{if}} \ \ \alpha>0 \ , \label{eq:8.2.10}\\
\left\|\left(\e^{-\tau B}\e^{-\tau A}\right)^k A\right\|\ & \leq & {\tilde{L}_3(1+\ln k)} +
{C_A'\over k\tau}\ , \ \
{\rm{if}} \ \ \alpha=0 \ . \label{eq:8.2.11}
\end{eqnarray}
\el
\begin{proof}
We start with the following chain of estimates:
\begin{eqnarray*}
& & \left\|\left(\e^{-\tau B}\e^{-\tau A}\right)^k A\right\|\leq \left\|\left(\left(\e^{-\tau B}
\e^{-\tau A}\right)^k -
\e^{-k\tau A}\right)A\right\| + \left\|\e^{-k\tau A}A\right\| \\
& & \hspace{1.0cm} \leq \left\|\sum_{j=0}^{k-1}\left(\e^{-\tau B}\e^{-\tau A}\right)^{k-1-j}
\left(\e^{-\tau B}-I\right)
\e^{-\tau A} \e^{-j\tau A}A\right\| + \left\|\e^{-k\tau A}A\right\| \\
& & \hspace{1.0cm}\leq \sum_{j=0}^{k-1}\left\|\int_0^\tau ds \ \e^{-sB}B\, A^{-\alpha}\right\|
\left\|A^{\alpha}\e^{-(j+1) \tau A}A\right\| + \left\|\e^{-k\tau A}A\right\| \ .
\end{eqnarray*}
Notice that the second inequality is in particular due to contractions of $\e^{-tA}$ and $\e^{-tB}$,
and to equation
(\ref{eq:8.1.3}) of Lemma \ref{lem:8.1.2}. {From} the hypothesis ${\rm{dom}}(A^{\alpha})
\subseteq{\rm{dom}}(B)$ we
deduce that $\|BA^{-\alpha}\|\leq d$, see Remark \ref{rem:8.2.1}. By Propositions \ref{prop:8.1.7}
and Theorem
\ref{th:8.1.12} we get respectively:
$$
\left\|\e^{-k\,\tau A}A\right\|\leq{C_A' \over k\,\tau} \ \ \ \mbox{ and } \ \
\left\|A^{1+\alpha}\e^{-(j+1)\,\tau A}\right\|\leq {M_\alpha
\over ((j+1)\tau)^{1+\alpha}} \ .
$$
Therefore, we conclude that:
\begin{eqnarray*}
& & \left\|\left(\e^{-\tau B}\e^{-\tau A}\right)^k A\right\|\leq
{M_\alpha d\over\tau^{\alpha}} \sum_{j=0}^{k-1}{1\over (j+1)^{1+\alpha}} + {C_A'\over k\,\tau} \, .
\end{eqnarray*}
Since $\alpha>0$, this gives the announced result (\ref{eq:8.2.10}) with

$$L_3 = d \, M_\alpha  \sum_{j=1}^{\infty}(1/j)^{1+\alpha}\ ,$$

and (\ref{eq:8.2.11}) for $\alpha=0$ with $\tilde{L}_3 = \|B\| \, C_A'$.
\hfill $\square$ \end{proof}

Note that since ${\rm{dom}}(A^\alpha)\subseteq{\rm{dom}}(B)$ implies ${\rm{dom}}(A^{\alpha'})
\subseteq {\rm{dom}}(B)$ for
$\alpha'\geq\alpha$, the estimate (\ref{eq:8.2.10}) is valid in fact for any $\alpha'\geq \alpha$.

\subsection{\blue{Convergence rate}} \label{subsec:8.2.2}
\bt\label{th:8.2.6}
Let $\{\e^{-zA}\}_{z \, \in \, S_\theta}$ be a holomorphic contraction semigroup, that is,
$A \in \mathscr{H}_{c}(\theta, 0)$. Let $B$ be generator of a
contraction semigroup. If there exists $\alpha\in [0,1)$ such that
${\rm{dom}}(A^{\alpha})\subseteq{\rm{dom}}(B)$ and ${\rm{dom}}(A^*)\subseteq{\rm{dom}}(B^*)$,
then there are constants $M_1, M_2, \tilde{M}_2, \eta>0$, such that for
any $t\geq 0$ and $n>2$ one gets estimates
\begin{eqnarray}
\left\|\left(\e^{-tB/n}\e^{-tA/n}\right)^n - \e^{-t(A + B)}\right\| & \leq & \e^{\eta t}(M_1 +
M_2 \, t^{1-\alpha})\ {\ln n\over n^{1-\alpha}}\ , \ \alpha>0,\label{eq:8.2.12}\\
\left\|\left(\e^{-tB/n}\e^{-tA/n}\right)^n - \e^{-t(A + B)}\right\| &
\leq & \e^{\eta t}(M_1 + \tilde{M}_2 \, t)\
{2\, (\ln n)^2\over n}\ , \ \ \alpha=0.\label{eq:8.2.13}
\end{eqnarray}
\et
\begin{proof}
Since $B$ satisfies hypothesis (H1) and (H2), by Lemma \ref{lem:8.2.2} the operator $H = (A+B)$ is
generator of a holomorphic contraction
semigroup. If operator $A$ has no bounded inverse, let $\tilde{A}:=A + \eta$ and
$\tilde{H}:=\tilde{A} + B$ for some $\eta >0$
(see Remark \ref{rem:8.2.1}). Then both operators are boundedly invertible. As we indicated above,
these changes of
generators do not modify the domain inclusions. If we want to obtain $\|B\tilde{A}^{-1}\|<1$ then by
the estimate
(\ref{eq:8.2.1}) we have to choose a sufficiently large shift parameter $\eta > 0$. This gives us
the estimate
$\|\tilde{A}\tilde{H}^{-1}\| = \|(\mathds{1} + B\tilde{A}^{-1})^{-1}\| \leq {1/ (1-a)}$ where we set
$a=\|B\tilde{A}^{-1}\|$.

Now we put $\tau := t/n$, $\tilde{U}(t):=\e^{-t\tilde{H}}$, and $\tilde{T}(\tau) :=
\e^{-\tau B}\e^{-\tau\tilde{A}}$.
To estimate the left-hand side of (\ref{eq:8.2.12}) we use
\begin{equation}\label{eq:8.2.14}
\left(\e^{-tB/n}\e^{-tA/n}\right)^n - \e^{-t(A + B)} = (\tilde{T}^n(\tau) -
\tilde{U}^n(\tau)) \ \e^{t\eta} \ ,
\end{equation}
and \textit{telescopic} identity:
\begin{eqnarray*}
\tilde{T}(\tau)^n-\tilde{U}(\tau)^n & = & \sum_{m=0}^{n-1}\tilde{T}(\tau)^{n-m-1}(\tilde{T}
(\tau)-\tilde{U}(\tau))
\tilde{U}(\tau)^m \\
& = & \tilde{T}(\tau)^{n-1}\tilde{A}\tilde{A}^{-1}(\tilde{T}(\tau)-\tilde{U}(\tau)) \\
& & + (\tilde{T}(\tau)-\tilde{U}(\tau))\tilde{A}^{-1}\tilde{A}\tilde{H}^{-1}\tilde{H}
\tilde{U}(\tau)^{n-1}\\
& &  + \sum_{m=1}^{n-2}\tilde{T}(\tau)^{n-m-1}\tilde{A}\tilde{A}^{-1}(\tilde{T}(\tau)-
\tilde{U}(\tau))\tilde{A}^{-1}
\tilde{A}\tilde{H}^{-1}\tilde{H}\tilde{U}(\tau)^m \ ,
\end{eqnarray*}
which implies
\begin{eqnarray*}
& & \left\|\tilde{T}(\tau)^n-\tilde{U}(\tau)^n\right\|\leq
\|\tilde{T}(\tau)^{n-1}\tilde{A}\| \, \|\tilde{A}^{-1}(\tilde{T}(\tau)-\tilde{U}(\tau))\| \\
& & \hspace{1.5cm} + \|(\tilde{T}(\tau)-\tilde{U}(\tau))\tilde{A}^{-1}\| \,
\|\tilde{A}\tilde{H}^{-1}\| \, \|\tilde{H}\tilde{U}(\tau)^{n-1}\|\\
& & \hspace{1.5cm} + \sum_{m=1}^{n-2}\|\tilde{T}(\tau)^{n-m-1}\tilde{A}\| \, \|\tilde{A}^{-1}
(\tilde{T}(\tau)-\tilde{U}
(\tau))\tilde{A}^{-1}\| \, \|\tilde{A}\tilde{H}^{-1}\| \, \|\tilde{H}\tilde{U}(\tau)^m\| \, .
\end{eqnarray*}
Hence by Lemmata \ref{lem:8.2.3}, \ref{lem:8.2.4}, and \ref{lem:8.2.5} (it is at this point that
we use the
hypothesis of \textit{contraction}), and by Proposition \ref{prop:8.1.7} we obtain the estimate :
\begin{eqnarray}
& & \|\tilde{T}(\tau)^n-\tilde{U}(\tau)^n\| \leq \left({L_3\over\tau^\alpha} +
{C_A'\over(n-1)\tau}\right)L_1\tau +
{L_1\over 1-a}{C_H'\over n-1}\nonumber\\
& & \hspace{3.5cm} + \sum_{m=1}^{n-2}\left(L_3 \tau^{1-\alpha} +
{C_A'\over n-m-1}\right){L_2\over 1-a}{C_H'\over m}
\nonumber\\
& & \hspace{2.0cm}\leq L_3 L_1 {t^{1-\alpha}\over n^{1-\alpha}} +
{L_1\over n-1}\left(C_A'+{C_H'\over 1-a}\right) +
{L_3 L_2 C_H'\over 1-a}{t^{1-\alpha}\over n^{1-\alpha}} \sum_{m=1}^{n-2}{1\over m}\nonumber\\
& & \hspace{3.0cm}+ {L_2 C_H' C_A'\over 1-a}\sum_{m=1}^{n-2}{1\over n-m-1}\cdot{1\over m}\nonumber\\
& & \hspace{2.0cm}\leq L_3 L_1 {t^{1-\alpha}\over n^{1-\alpha}} +
{L_1\over n-1}\left(C_A'+{C_H'\over 1-a}\right)
\nonumber\\
& & \hspace{3.0cm} + 2 \, {L_3 L_2 C_H'\over 1-a}\, t^{1-\alpha}\ {\ln n\over n^{1-\alpha}} +
4{L_2 C_H' C_A'\over 1-a}\ {\ln n\over n}. \label{eq:8.2.15}
\end{eqnarray}
Here we used that:
\begin{equation*}
\sum_{m=1}^{n-1} {1\over (n-m)m} = {2\over n}\sum_{m=1}^{n-1} {1\over m} \leq {2\over n}(1 +
\ln(n-1)) \leq 4{\ln n\over n}.
\end{equation*}
The estimate (\ref{eq:8.2.15}) and (\ref{eq:8.2.14}) imply the announced result (\ref{eq:8.2.12})
for $\alpha>0$, with
$ M_1=4\, L_1\left(C_A'+ {\displaystyle C_H'\over \displaystyle 1-a}\right) +
4\, {\displaystyle L_2 C_H' C_A'\over\displaystyle 1-a}$ and $M_2 =
2\, L_3 L_1+2\, {\displaystyle L_3 L_2 C_H'\over\displaystyle 1-a}$.

In a similar way one gets also estimate for $\alpha=0$:
\begin{eqnarray*}
& & \|\tilde{T}(\tau)^n-\tilde{U}(\tau)^n\| \leq  \tilde{L}_3(1+\ln(n-1))\ L_1\, {t\over n} +
{L_1 C_A'\over n-1} +
{L_1{C}_H'\over 1-a}{1\over n-1}\\
& & \hspace{2.5cm} + \sum_{m=1}^{n-2}\left(\tilde{L}_3{t\over n} (1+\ln(n-m-1)) +
{C_A'\over n-m-1}\right)
{L_2\over 1-a}{C_H'\over m}\, .
\end{eqnarray*}
This estimate together with (\ref{eq:8.2.14}) yield (\ref{eq:8.2.13}) for
$\tilde{M}_2=2 \, \tilde{L}_3 L_1+ 2 \, {\displaystyle\tilde{L}_3 L_2 C_H'\over\displaystyle 1-a}$.
\hfill $\square$
\end{proof}
\bt\label{th:8.2.7}
Let $\{\e^{-zA}\}_{z \, \in \, S_\theta}$ be a holomorphic contraction semigroup, that is,
$A \in \mathscr{H}_{c}(\theta, 0)$. Let $B$ be  generator of a contraction
semigroup, and there exists $\alpha\in [0,1)$ such that
${\rm{dom}}((A^{\alpha})^*)\subseteq{\rm{dom}}(B^*)$ and ${\rm{dom}}(A) \subseteq{\rm{dom}}(B)$.
If in addition ${\rm{dom}}(A^*)\subseteq{\rm{dom}}(B^*)$ (for the case, when the space
$\mathfrak{X}$ is not reflexive), then there are constants $M_3$, $M_4$, $\tilde{M}_4$, $\eta>0$,
such that for any $t\geq 0$ and $n>2$:
\begin{eqnarray}
\left\|\left(\e^{-tA/n}\e^{-tB/n}\right)^n - \e^{-t(A + B)}\right\| & \leq & \e^{\eta t}(M_3 +
M_4 \, t^{1-\alpha})\
{\ln n\over n^{1-\alpha}}, \ \alpha>0\, , \label{eq:8.2.16}\\
\left\|\left(\e^{-tA/n}\e^{-tB/n}\right)^n - \e^{-t(A + B)}\right\| & \leq & \e^{\eta t} (M_3 +
\tilde{M}_4 \, t)\
{2\, (\ln n)^2\over n}, \ \alpha=0\, , \label{eq:8.2.17}
\end{eqnarray}
for any $t\geq 0$ and $n>2$.
\et
\begin{proof}
Let $\tilde{F}(\tau):=\e^{-\tau \tilde{A}}\e^{-\tau B}$. Then by the same arguments as in the
proof of Theorem \ref{th:8.2.6}, one obtains:
\begin{eqnarray*}
\tilde{U}(\tau)^n-\tilde{F}(\tau)^n & = & \sum_{m=0}^{n-1}\tilde{U}(\tau)^{n-m-1}(\tilde{U}
(\tau)-\tilde{F}
(\tau))\tilde{F}(\tau)^m \\
& = & \tilde{U}(\tau)^{n-1}\tilde{H}\tilde{H}^{-1}\tilde{A}\, \tilde{A}^{-1}(\tilde{U}(\tau)-\tilde{F}
(\tau)) \\
& & + (\tilde{U}(\tau)-\tilde{F}(\tau))\tilde{A}^{-1}\tilde{A}\tilde{F}(\tau)^{n-1}\\
& &  + \sum_{m=1}^{n-2}\tilde{U}(\tau)^{n-m-1}\tilde{H}\tilde{H}^{-1}\tilde{A}\,\tilde{A}^{-1}
(\tilde{U}(\tau)-
\tilde{F}
(\tau))\tilde{A}^{-1}\tilde{A}\tilde{F}(\tau)^m.
\end{eqnarray*}
Notice that the Lemmata \ref{lem:8.2.3} and \ref{lem:8.2.4} hold for $\tilde{F}(\tau)$.
By a simple modification
of Lemma \ref{lem:8.2.5}, where one uses $\|\tilde{A}^{-\alpha}B\| =
\|B^*(\tilde{A}^{-\alpha})^*\|<\infty$,
we find that

\begin{eqnarray*}
\left\|\tilde{A}\left(\e^{-\tau \tilde{A}}\e^{-\tau B}\right)^k\right\| & \leq &
{L_4\over \tau^\alpha} +
{C_A'\over k\, \tau}\ , \ \ \ \ \ \ \ \ \ \ \ \ \ \ \ \ \alpha>0,\\
\left\|\tilde{A}\left(\e^{-\tau \tilde{A}}\e^{-\tau B}\right)^k\right\| & \leq &
{\tilde{L}_4(1+\ln k)} + {C_A'\over k\, \tau}\ , \ \ \ \  \alpha=0.
\end{eqnarray*}
These ingredients ensure the estimates (\ref{eq:8.2.16}) and (\ref{eq:8.2.17}).
\hfill $\square$
\end{proof}

\bc\label{cor:8.2.8}
Under the same conditions as in Theorem \emph{\ref{th:8.2.6}}, we have the operator-norm
convergence of the symmetrised Trotter formula, i.e., there exists $M_5$, $M_6$, $\tilde{M}_6$,
$\eta>0$, such that for any $t\geq 0$ and $n>2$:
\begin{eqnarray}
\left\|\left(\e^{-tA/2n}\e^{-tB/n}\e^{-tA/2n}\right)^n - \e^{-t(A + B)}\right\| & \leq &
\e^{\eta t}\, (M_5 +
M_6 \, t^{1-\alpha})\ {\ln n\over n^{1-\alpha}} \ ,  \nonumber \\
&& {\rm{for}} \ \ \  0 < \alpha < 1 \ , \ \ \ \label{eq:8.2.18}   \\
\left\|\left(\e^{-tA/2n}\e^{-tB/n}\e^{-tA/2n}\right)^n - \e^{-t(A + B)}\right\| & \leq &
\e^{\eta t}\, (M_5 + \tilde{M}_6 \,t)\ {2\,(\ln n)^2\over n} \ ,  \nonumber \\
&& {\rm{for}} \ \ \ \alpha=0\ . \ \ \ \label{eq:8.2.19}
\end{eqnarray}
\ec
\begin{proof}
Since Lemmata \ref{lem:8.2.3}, \ref{lem:8.2.4}, and \ref{lem:8.2.5} can be easily extended to the
symmetrized product
$\e^{-\tau A/2}\e^{-\tau B}\e^{-\tau A/2}$, the proof of the Theorem \ref{th:8.2.6} carries through
verbatim to obtain (\ref{eq:8.2.18}) and (\ref{eq:8.2.19}).
\hfill $\square$
\end{proof}
\br \label{rem:8.2.9}
\rm{
Seeing that in Theorems \ref{th:8.2.6}, \ref{th:8.2.7} and in Corollary \ref{cor:8.2.8} the
\textit{perturbation} $B$ of \textit{dominating} operator $A$ is either \textit{infinitesimally}
$A$-small,
or simply \textit{bounded}, the corresponding results in Banach space $\mathfrak{X}$ are
\textit{weaker} than those in Hilbert space $\mathfrak{H}$, see \cite{Rog93},
\cite{NZ98}, \cite{NZ99}, and \cite{Zag20}.
Recall that in \cite{Rog93}, \cite{NZ98} the
perturbation $B$ in $\mathfrak{H}$ is \textit{Kato-small} with respect to operator $A$
for relative bound $b < 1$. The \textit{fractional} condition (H2) in a Hilbert space $\mathfrak{H}$
was introduced in \cite{IT97}.
Note that in the both cases: $\mathfrak{X}$ and $\mathfrak{H}$, the
\textit{dominating} operator $A$ is supposed to be generator of a \textit{holomorphic} semigroup.}
\er
\section{\blue{Example}} \label{sec:8.3}
Resuming Remark \ref{rem:8.2.9}, the question arises: whether the Trotter product formula converges
in the \textit{operator-norm} topology if the condition on dominating generator
$A \in \mathscr{H}_{c}(\theta, 0)$ is relaxed to hypothesis that $A \in \mathscr{G}(1,0)$, i.e.,
it is generator of a \textit{contraction} (but \textit{not} holomorphic!)
semigroup and $B$ is a \textit{bounded} generator ?

The aim of this section is to give an answer to this question using \textit{example} of a certain
class of generators and semigroups. It turns out that appropriate for this purpose is the class
of generators of \textit{evolution semigroups}.
\subsection{\blue{Evolution semigroups and Trotter product formula}} \label{subsec:8.3.1}
To proceed further we need some key notions from the \textit{evolution semigroups} theory and
in particular the notion of \textit{solution operator}.

A strongly continuous mapping $U(\cdot,\cdot): \gD \longrightarrow \cL(\mathfrak{X})$,
where domain $\gD := \{(t,s): 0 < s \le t \le T\}$ and $\cL(\mathfrak{X})$ is the set of bounded
operators on \textit{separable} Banach space $\mathfrak{X}$, is called a
\textit{solution operator} if the conditions
%
\begin{eqnarray*}
&& {\rm{(i)}} \ \sup_{(t,s)\in\gD}\|U(t,s)\|_{\cL(\mathfrak{X})} < \infty \ , \\
&& {\rm{(ii)}} \ U(t,s) = U(t,r)U(r,s) \, , \  \ 0 < s \le r \le t \le T \, ,
\end{eqnarray*}
%
are satisfied. Let us consider the Banach space $L^p(\cI,\mathfrak{X})$ for $\cI := [0,T]$ and
$p \in [1,\infty)$. The operator $\cK$ is an \textit{evolution generator} of the evolution semigroup
$\{\cU(\gt):= \e^{-\gt \cK}\}_{\tau\geq0}$ if there is a solution operator such that the
\textit{Howland-Evans-Neidhardt} representation, see \cite{How74}, \cite{Ev76}, \cite{Nei79}
and \cite{Nei81}:
\begin{equation}\label{eq:1.5}
(\e^{-\gt \cK}f)(t) = U(t,t-\gt)\chi_{\cI}(t-\gt)f(t-\gt), \quad f \in L^p(\cI,\mathfrak{X}) \ ,
\end{equation}
holds for a.a. $t \in \cI$ and $\gt \ge 0$. Seeing that on account of (\ref{eq:1.5}) the
semigroup $\{\e^{-\gt \cK}\}_{\tau\geq0}$ is \textit{nilpotent}: $\e^{-\gt \cK} f = 0$ for
$\gt \ge T$, the evolution generator $\cK$ can never be generator of a holomorphic semigroup.

A simple example of an evolution generator is the differentiation operator, cf. \cite{NSZ20}:
\begin{equation}\label{eq:1.6}
\begin{split}
(D_0f)(t) &:= \partial_{t} f(t), \\
f \in \dom(D_0) &:= \{f \in W^{1,p}(\cI,\mathfrak{X}): f(0) = 0\}\, ,
\end{split}
\end{equation}
where $W^{1,p}(\cI,\mathfrak{X})$ is the \textit{Sobolev} space of order $(1,p)$ of \textit{Bochner}
$p$-integrable functions.
Then by (\ref{eq:1.6}) one obviously gets the contraction shift semigroup:
\begin{equation}\label{eq:1.61}
(\e^{-\gt D_0}f)(t) = \chi_{\cI}(t-\gt)f(t-\gt), \quad f \in L^p(\cI,\mathfrak{X}),
\end{equation}
for a.a. $t \in \cI$ and $\gt \ge 0$. Hence, (\ref{eq:1.5}) implies that the corresponding
solution operator of the \textit{non-holomorphic} evolution semigroup $\{\e^{-\gt D_0}\}_{\tau\geq0}$
is given by $U_{D_0}(t,s) = \1$, for all $(t,s) \in \gD$.

Below we consider the operator $\cK_0 := \overline{D_0 + \cA}\, $,
where $\cA$ is the \textit{multiplication} operator induced by generator $A$ of a
\textit{holomorphic contraction} semigroup on $\mathfrak{X}$. More precisely
\begin{equation*}
\begin{split}
(\cA f)(t) &:= Af(t), \ {\rm{and}} \ (\e^{-\gt \cA} f)(t) = \e^{-\gt A}  f(t) \ , \\
f \in \dom(\cA) &:= \{f \in L^p(\cI,\mathfrak{X}): Af \in L^p(\cI,\mathfrak{X})\} \ .
\end{split}
\end{equation*}
Then the perturbation of the shift semigroup (\ref{eq:1.61}) by $\cA$ corresponds to the semigroup
with generator $\cK_0$. One easily checks that $\cK_0$ is an evolution generator of a contraction
semigroup on $L^p(\cI,\mathfrak{X})$, that is never holomorphic \cite{NSZ20}. Indeed, since the
generators $D_0$ and $\cA$ commute, the representation (\ref{eq:1.5}) for evolution semigroup
$\{\e^{-\gt \cK_0}\}_{\tau\geq0}$ takes the form:
\begin{equation*}
(\e^{-\gt \cl K_0}f)(t) = \e^{-\gt A}\chi_{\cI}(t-\gt)f(t-\gt), \quad f \in L^p(\cI,\mathfrak{X}) \ ,
\end{equation*}
for a.a. $t \in \cI = [0,T]$ and $\gt \ge 0$. Then by (\ref{eq:1.5}) the
solution operator $U_{0}(t,s) = \e^{-(t-s) A}\, $.
Therefore, $\e^{-\gt \cK_0} f = 0$ for $\gt \ge T$, that is, semigroup
$\{\e^{-\gt \cK_0}\}_{\tau\geq0}$ is \textit{nilpotent}.

Furthermore, if $\{B(t)\}_{t \in \cI}$ is a \textit{strongly measurable} family of generators of
contraction semigroups on $\mathfrak{X}$, that is, $B(\cdot): \cI \longrightarrow \cG(1,0)$, then the
induced \textit{multiplication} operator $\cB$ :
\begin{align}\label{eq:1.65}
(\cB f)(t) &:= B(t) f(t) \ ,\\
f \in \dom(\cB) &:= \left\{f \in L^p(\cI,\mathfrak{X}):
\!\!\!\!\begin{matrix} \ \
& \ \ f(t) \in \dom(B(t)) \ \ \ \mbox{for a.a.} \ t \in \cI\\
& B(t)f(t) \in L^p(\cI,\mathfrak{X})
\end{matrix}\right\}\, ,\nonumber
\end{align}
is a generator of a contraction semigroup on $L^p(\cI,\mathfrak{X})$.

In the next Subsection \ref{subsec:8.3.2} we consider perturbation of generator $\cK_0$
by multiplication operator $\cB$ (\ref{eq:1.65}). Thereupon we construct by means of the
\textit{Trotter} product formula approach a corresponding perturbed semigroup.
\br \label{rem:8.3.1}
\rm{
We conclude by remarks concerning some \textit{notations} and \textit{definitions} that we use
below throughout Section \ref{sec:8.3}.
\begin{enumerate}
\item For characterisation the rate of convergence we use, so-called, \textit{Landau's} symbols:
\begin{align}
g(n) &= O(f(n)) \Longleftrightarrow \limsup_{n\to\infty} \left|\frac{g(n)}{f(n)}\right| < \infty \ ,
\nonumber \\
g(n) &= o(f(n)) \Longleftrightarrow \limsup_{n\to\infty} \left|\frac{g(n)}{f(n)}\right| = 0 \ ,
\nonumber \\
g(n) &= \gT(f(n)) \Longleftrightarrow 0 < \liminf_{n\to\infty} \left|\frac{g(n)}{f(n)}\right|
\le \limsup_{n\to\infty} \left|\frac{g(n)}{f(n)}\right| < \infty \ , \nonumber \\
g(n) &=  \go(f(n)) \Longleftrightarrow \limsup_{n\to\infty} \left|\frac{g(n)}{f(n)}\right| = \infty \ .
\nonumber
\end{align}

\item We also use the notation $C^{0,\beta}(\cl I):=\{f:\cl I\rightarrow \C \ :
\mathrm{there~exists~some~~ } K > 0 \mathrm{~~such~that~ } |f(x)-f(y)|\leq K \ |x-y|^\beta ,
\mathrm{~~for~any ~} x,y \in \cl I \mathrm{~and~} \beta \in (0,1] \, \}\, $.

\item
Below we consider the Banach space $L^p(\cI,\mathfrak{X})$ for  $\cI := [0, T]$ and
$p \in [1,\infty)$.
\end{enumerate}
}
\er
\subsection{\blue{Trotter product formula}} \label{subsec:8.3.2}
We reminisce (cf. Subsection \ref{subsec:8.3.1}) that semigroup $\{\cU(\gt)\}_{\gt \ge 0}$, on the
Banach space $L^p(\cI,\mathfrak{X})$ is called the \textit{evolution semigroup} if there is a
\textit{solution operator}: $\{U(t,s)\}_{(t,s)\in\gD}$, such that representation (\ref{eq:1.5})
holds.

Let $\cK_0$ be the generator of an evolution semigroup $\{\cU_0(\gt)\}_{\gt \ge 0}$ and let $\cB$
be a multiplication operator induced by a measurable family $\{B(t)\}_{t\in\cI}$ of generators of
contraction semigroups. Note that in this case the multiplication operator $\cB$ (\ref{eq:1.65}) is a
generator of a contraction semigroup $(\e^{- \tau \, \cB} f)(t) = \e^{- \tau \, B(t)} f(t)$,
on the Banach space $L^p(\cI,\mathfrak{X})$. Since $\{\cU_0(\gt)\}_{\gt \ge 0}$ is an evolution
semigroup, then by definition (\ref{eq:1.5}) there is a propagator $\{U_0(t,s)\}_{(t,s) \in \gD}$
such that the representation
\begin{equation*}
(\cU_0(\gt)f)(t) = U_0(t,t-\gt) \ \chi_\cI(t-\gt)f(t-\gt), \quad f \in L^p(\cI,\mathfrak{X}),
\end{equation*}
is valid for a.a. $t \in \cI$ and $\gt \ge 0$. Then we define $\tau_{n} := (t-s)/n$, for $n \in \dN$,
and
\begin{equation*}
G_j(t,s;n) := U_0(s + j \, \tau_{n}\, , \ s + (j-1)\, \tau_{n}) \
\e^{- \ \tau_{n} \, B \, \big(s \, + \, (j-1)\ \tau_{n}\big)} \, , \quad (t,s) \in \gD \, ,
\end{equation*}
where $j \in \{1,2,\ldots,n\}$, $n \in \dN$, $(t,s) \in \gD$, and we set
\begin{equation*}
V_n(t,s) := \prod^{1}_{j=n}G_j(t,s;n), \quad n \in \dN, \quad (t,s) \in \gD \, .
\end{equation*}
That is, the product is increasingly ordered in $j$ from the right to the left.
Then a straightforward computation shows that the representation
\begin{equation}\label{eq:2.01}
\left(\left(\e^{-\gt \cK_0/n}\e^{-\gt \cB/n}\right)^n f\right)(t) =
V_n(t,t-\gt) \ \chi_\cI(t-\gt)f(t-\gt) \ ,
\end{equation}
$f \in L^p(\cI,\mathfrak{X})$, holds for each $\gt \ge 0$ and a.a. $t \in \cI$.
\begin{theorem}\label{th:2.1}
Let $\cK$ and $\cK_0$ be generators of evolution semigroups on the Banach space
$L^p(\cI,\mathfrak{X})$ for some
$p \in [1,\infty)$. Further, let $\{B(t)\in \cG(1,0)\}_{t\in \cI}$ be a strongly measurable family of
generators of contraction semigroups on $\mathfrak{X}$. Then
\bea\label{eq:2.0}
\lefteqn{
\sup_{\gt\ge 0}\left\|\e^{-\gt \cK} -
\left(\e^{-\gt \cK_0/n}\e^{-\gt \cB/n}\right)^n\right\|_{\cL(L^p(\cI,\mathfrak{X}))}}\\
& &  = \esssup_{(t,s)\in \gD}\|U(t,s) - V_n(t,s)\|_{\cL(\mathfrak{X})}, \quad n\in \dN.\nonumber
\eea
\end{theorem}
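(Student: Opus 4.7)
The plan is to express both operators pointwise via their propagators using the Howland--Evans--Neidhardt representation, subtract, and then reduce the computation of the $L^p$ operator norm to an essential supremum of pointwise operator norms on $\mathfrak{X}$.

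First, by representation (\ref{eq:1.5}) one has $(\e^{-\gt \cK} f)(t) = U(t, t-\gt)\, \chi_{\cI}(t-\gt)\, f(t-\gt)$, while by (\ref{eq:2.01}) one has $((\e^{-\gt \cK_0/n}\e^{-\gt \cB/n})^n f)(t) = V_n(t, t-\gt)\, \chi_{\cI}(t-\gt)\, f(t-\gt)$. Subtracting and setting $W_n(t,s) := U(t,s) - V_n(t,s)$ on $\gD$ gives, for a.a.\ $t \in \cI$ and every $\gt \ge 0$,
\begin{equation*}
\big(\big(\e^{-\gt \cK} - (\e^{-\gt \cK_0/n}\e^{-\gt \cB/n})^n\big) f\big)(t) = W_n(t, t-\gt)\, \chi_{\cI}(t-\gt)\, f(t-\gt).
\end{equation*}

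Next, I reduce to computing the $L^p(\cI,\mathfrak{X})$-operator norm of the shift--multiplication operator $T_\gt$ defined by $(T_\gt f)(t) := W_n(t, t-\gt)\, \chi_{\cI}(t-\gt)\, f(t-\gt)$. The upper bound
\begin{equation*}
\|T_\gt f\|_{L^p}^p = \int_{\cI} \|W_n(t,t-\gt)\, f(t-\gt)\|^p\, \chi_{\cI}(t-\gt)\, dt \le \Big(\esssup_{t \in [\gt,T]}\|W_n(t,t-\gt)\|_{\cL(\mathfrak{X})}\Big)^{\!p} \|f\|_{L^p}^p
\end{equation*}
is immediate, since the shift by $\gt$ is an isometric embedding on $L^p$. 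For the matching lower bound I concentrate a test function near almost-maximisers of $t \mapsto \|W_n(t,t-\gt)\|$: using separability of $\mathfrak{X}$ together with strong measurability of $U(\cdot,\cdot)$ and $V_n(\cdot,\cdot)$, a measurable selection furnishes for each $\epsilon > 0$ a unit-vector field $t \mapsto x(t) \in \mathfrak{X}$ with $\|W_n(t,t-\gt) x(t)\| \ge (1-\epsilon)\|W_n(t,t-\gt)\|$. Testing $T_\gt$ against $x(\cdot - \gt)$ multiplied by the indicator of the set where $\|W_n(\cdot, \cdot - \gt)\|$ lies within $\epsilon$ of its essential supremum saturates the bound up to $\epsilon$; sending $\epsilon \to 0$ gives equality between the $L^p$-operator norm of $T_\gt$ and $\esssup_{t \in [\gt,T]}\|W_n(t,t-\gt)\|_{\cL(\mathfrak{X})}$.

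Taking $\sup_{\gt \ge 0}$ and noting that the map $(t,\gt) \mapsto (t,t-\gt)$ is, up to a null set, a bijection from $\{(t,\gt): 0 \le \gt \le t \le T\}$ onto $\gD = \{(t,s): 0 < s \le t \le T\}$ yields (\ref{eq:2.0}); observe also that for $\gt \ge T$ the characteristic function $\chi_{\cI}(\cdot - \gt)$ is supported on a null set so both sides vanish, and the sup is really over $\gt \in [0, T]$. The main obstacle is the sharpness assertion in the second step: the fact that the $L^p$-operator norm equals (rather than merely majorises) the essential supremum of the pointwise $\cL(\mathfrak{X})$-norms. This is a standard statement about operator-valued multiplication operators, but it relies precisely on a measurable selection of approximate unit maximisers, which is where the standing separability of $\mathfrak{X}$ and the strong measurability of $U(\cdot,\cdot)$ and $V_n(\cdot,\cdot)$ are genuinely used. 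Everything else is direct bookkeeping.
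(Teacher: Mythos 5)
Your proposal is correct and follows essentially the same route as the paper: reduce, via the propagator representations (\ref{eq:1.5}) and (\ref{eq:2.01}), to the norm of an operator-valued multiplication operator (the paper does this by composing with the left-shift semigroup $L(\gt)$, you by a change of variables in the $L^p$ integral), and then identify that $L^p$-operator norm with the essential supremum of the pointwise $\cL(\mathfrak{X})$-norms using a measurable selection of approximate maximisers. The only step you gloss over is the passage from $\sup_{\gt\ge 0}$ of the per-$\gt$ essential suprema in $t$ to the joint essential supremum over $\gD$: since each slice $\{(t,t-\gt):t\}$ is a null set in $\gD$, this needs the strong continuity in $\gt$ (so that $\sup_{\gt}$ may be replaced by $\esssup_{\gt}$ before applying the iterated-esssup identity), which is exactly the point the paper makes explicit.
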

\begin{proof}
Let $\{L(\gt)\}_{\gt \ge 0}$ be the left-shift semigroup on the Banach space $L^p(\cI,\mathfrak{X})$:
\begin{equation*}
(L(\gt)f)(t) = \chi_\cI(t+\gt)f(t+\gt), \quad f \in L^p(\cI,\mathfrak{X}).
\end{equation*}
Using that we get
\bea
\lefteqn{
\left(L(\gt)\left(\e^{-\gt \cK} - \left(\e^{-\gt/n \cK_0}\e^{-\gt \cB/n}\right)^n\right)f\right)(t)}
\nonumber \\
&& =
\left\{U(t+\gt,t) - V_n(t+\gt,t)\right\} \, \chi_\cI(t+\gt)f(t) \ , \nonumber
\eea
for $\gt \ge 0$ and a.a. $t \in \cI$.

It turns out that for each $n \in \dN$ the operator
$L(\gt)\left(\e^{-\gt \cK} - \left(\e^{-\gt/n \cK_0}\e^{-\gt \cB/n}\right)^n\right)$ is a
multiplication operator
induced by $\{(U(t+\gt,t) - V_n(t+\gt,t)) \ \chi_\cI(t+\gt)\}_{t\in\cl I}$. As a consequence,
\bea
\lefteqn{\left\|L(\gt)\left(\e^{-\gt \cK} - \left(\e^{-\gt\cK_0/n}
\e^{-\gt \cB/n}\right)^n\right)\right\|_{\cL(L^p(\cI,\mathfrak{X}))}}
\nonumber \\
&& =
\esssup_{t\in \cI}\|U(t+\gt,t) - V_n(t+\gt,t)\|_{\cL(\mathfrak{X})} \ \chi_\cI(t+\gt) \ ,
\nonumber
\eea
for each $\gt \ge 0$. Note that one has
\bea
\lefteqn{
\sup_{\gt \ge 0}\left\|L(\gt)\left(\e^{-\gt \cK} -
\left(\e^{-\gt\cK_0/n}\e^{-\gt \cB/n}\right)^n\right)\right\|_{\cL(L^p(\cI,\mathfrak{X}))}}
\nonumber \\
&&= \esssup_{\gt \ge 0}\left\|L(\gt)\left(\e^{-\gt \cK} - \left(\e^{-\gt\cK_0/n}
\e^{-\gt \cB/n}\right)^n\right)\right\|_{\cL(L^p(\cI,\mathfrak{X}))} .
\nonumber
\eea
This is based on the fact that if $F(\cdot): \dR^+ \rightarrow \cL(\gotX)$ is strongly
continuous, then
$\sup_{\gt \ge 0}\|F(\gt)\|_{\cL(\gotX)} = \esssup_{\gt \ge 0}\|F(\gt)\|_{\cL(\gotX)}$.
Hence, we find
\bea
\lefteqn{
\sup_{\gt \ge 0}\left\|L(\gt)\left(\e^{-\gt \cK} -
\left(\e^{-\gt\cK_0/n}\e^{-\gt \cB/n}\right)^n\right)\right\|_{\cL(L^p(\cI,\mathfrak{X}))}}
\nonumber \\
&& =
\esssup_{\gt \ge 0}\esssup_{t\in \cI}\|U(t+\gt,t) -
V_n(t+\gt,t))\|_{\cL(\mathfrak{X})} \ \chi_\cI(t+\gt).\nonumber
\eea

Further, if $\Phi(\cdot,\cdot): \dR^+ \times \cI \rightarrow \cL(\mathfrak{X})$ is a strongly
measurable function, then
\begin{equation*}
\esssup_{(\gt,t)\, \in \, \dR^+\times \, \cI}\|\Phi(\gt,t)\|_{\cL(\mathfrak{X})} =
\esssup_{\gt \ge 0}\,\esssup_{t\in\cI}\|\Phi(\gt,t)\|_{\cL(\mathfrak{X})} .
\end{equation*}
Then, taking into account two last equalities, one obtains
\bea
\lefteqn{
\sup_{\gt \, \ge \, 0}\left\|L(\gt)\left(\e^{-\gt \cK} -
\left(\e^{-\gt\cK_0/n}\e^{-\gt \cB/n}\right)^n\right)\right\|_{\cL(\gotX)}} =
\nonumber \\
&& =
\esssup_{(\gt,t)\, \in \, \dR^+\times \, \cI}\|U(t+\gt,t) -
V_n(t+\gt,t)\|_{\cL(\mathfrak{X})} \ \chi_\cI(t+\gt)=\nonumber\\
&& =
\esssup_{(t,s)\, \in \, \gD}\|U(t,s) - V_n(t,s)\|_{\cL(\mathfrak{X})} \ , \nonumber
\eea
that proves (\ref{eq:2.0}).
\hfill $\square$
\end{proof}

We study bounded perturbations of the evolution generator  $\cK_0 = D_0$ (\ref{eq:1.6}).
To this aim we consider $\cl I =[0,1]$, $\mathfrak{X}= \C$ and we denote by  $L^p(\cl I)$ the
Banach space $L^p(\cl I, \C)$.

For $t \in \cl I$, let $q: t \mapsto q(t) \in L^\infty(\cl I)$.
Then, $q$ induces a \textit{bounded} multiplication operator $\cB = Q$ on the Banach space
$L^p(\cl I)$:
\begin{align}
(Qf)(t) = q(t) f(t), ~~f\in L^p(\cl I). \nonumber
\end{align}
For simplicity we assume that $q\geq 0$.
Then $Q$ generates on $L^p(\cl I)$ a contraction semigroup $\{\e^{- \tau Q}\}_{\tau \geq 0}$.
Since generator $Q$ is bounded, the closed operator $\cl A:= D_0 + Q$, with domain
$\dom(\cl A) = \dom(D_0)$, is generator of a $C_0$-semigroup on $L^p(\cl I)$. By the \textit{Trotter}
product formula in the \textit{strong} operator topology it follows immediately that
\begin{equation}\label{eq:3.00}
\lim_{n \rightarrow \infty}\left(\e^{-\gt D_0/n}\e^{-\gt Q/n}\right)^n f = \e^{-\gt (D_0+Q)}f,
\quad f\in L^p(\cI),
\end{equation}
uniformly in $\tau\in[0,T]$ on bounded time intervals.

Then we define on Banach space $\mathfrak{X} = \C$ a family of bounded operators
$\{V(t)\}_{t\in\cl I}$ by
\begin{align*}
 V(t) : =  \e^{- \int_0^t ds \, q(s) } \ , \quad t\in\cl I \, .
\end{align*}
Note that for almost every $t\in \cl I$ these operators are positive. Then $V^{-1}(t)$ exists
and it has the form
\begin{align*}
V^{-1}(t) =  \e^{ \int_0^t ds \, q(s) } \ , \quad t\in\cl I \, .
\end{align*}
The operator families $\{V(t)\}_{t\in\cl I}$ and $\{V^{-1}(t)\}_{t\in\cl I}$ induce two
bounded multiplication operators $\cl V $ and $\cl V ^{-1}$ on $L^p(\cl I)$, respectively. Then
invertibility implies that $\cl V \ \cl V^{-1} = \cl V^{-1} \, \cl V = \1 \big|_{L^p(\cl I)}\, $.
Using the operator $\cl V$ one easily verifies that $D_0+Q$ is {similar} to $D_0$, that is, one has
\begin{align}
\cl V^{-1}(D_0 + Q)\ \cl V= D_0 ~~\mathrm{or}~~D_0 + Q= \cl V \, D_0 \, \cl V^{-1} \ . \nonumber
\end{align}
Hence, the semigroup generated on $L^p(\cl I)$  by $D_0 + Q$ gets the explicit form:
\begin{align}\label{eq:3.0}
\left(\e^{-\tau(D_0 + Q)}f\right)(t) = \left(\cl V \ \e^{-\tau D_0} \, \cl V^{-1} f\right)(t) = \\
= \e^{-\int_{\, t-\tau}^t \, dy \, q(y) } \, f(t-\tau)\ \chi_{\cl I}(t-\tau) \nonumber \ .
\end{align}
Since by (\ref{eq:1.5}) the solution operator $U(t,s)$ that corresponds to evolution semigroup
(\ref{eq:3.0}) is defined by equation
\begin{align}
\left(\e^{-\tau(D_0 + Q)}\right)f(t) = U(t, t-\tau) f(t-\tau) \, \chi_{\cl I}(t-\tau) \ , \nonumber
\end{align}
we deduce that it is equal to $U(t, s) = \e^{-\int_s^t dy \, q(y) }$.

Now we study the corresponding Trotter product formula. For a fixed $\tau \geq 0$ and
$n\in \mathbb{N}$, we define \textit{approximates} $\{V_n\}_{n\geq 1}$ by
\begin{equation*}
\left(\left(\e^{- \tau D_0/n}\e^{- \tau Q/n}  \right)^n f\right)(t) =:
V_n(t,t-\gt) \, \chi_\cI(t-\gt)f(t-\gt) \ .
\end{equation*}
Then by straightforward calculations, which are similar to  (\ref{eq:2.01}), one finds that
approximants have the following explicit form:
\begin{equation*}
V_n(t,s)= \e^{- \tau_n \, \sum_{k=0}^{n-1} \, q\,(s + \, k \, \tau_n )} \ ,
\quad (t,s) \in \gD \ , \quad \tau_n = (t-s)/n \ , \quad n\in \mathbb{N} \, .
\end{equation*}
%
\begin{theorem}\label{th:2.2}
Let $q \in L^\infty(\cI)$ be non-negative. Then
\begin{equation*}
\begin{split}
&\sup_{\gt \ge 0}\left\|\e^{-\gt(D_0 + Q)} -
\left(\e^{-\gt D_0/n}\e^{-\gt Q/n}\right)^n\right\|_{\cL (L^p(\cl I))} =
\\
&
\gT\left(\esssup_{(t,s)\in\gD}\Big|\int^t_s \, dy \, q(y) - \tau_n \, \sum_{k=0}^{n-1} q \, (s +
k \, \tau_n)\Big|\right) \ , \quad n\in \mathbb{N} \, ,
\end{split}
\end{equation*}
as $n\to\infty$, where $\gT$ is the \textit{Landau} symbol defined in Remark \emph{\ref{rem:8.3.1}},
see Subsection \emph{\ref{subsec:8.3.1}}.
\end{theorem}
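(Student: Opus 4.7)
The plan is to reduce Theorem 2.2 directly to Theorem \ref{th:2.1} and then carry out an elementary scalar exponential comparison. Since $q \in L^\infty(\cI)$, the multiplication operator $Q$ is bounded on $L^p(\cI)$, hence $\cK := D_0 + Q$ with $\dom(\cK) = \dom(D_0)$ generates a contraction $C_0$-semigroup. The similarity identity $D_0 + Q = \cV D_0 \cV^{-1}$ established in the passage leading to (\ref{eq:3.0}) shows that this semigroup is again an evolution semigroup, with solution operator
$$U(t,s) \;=\; \e^{-\int_s^t \, dy \, q(y)}\ , \qquad (t,s)\in\gD.$$
Thus Theorem \ref{th:2.1} applies with $\cK_0 = D_0$ and $\cB = Q$, and reduces the left-hand side of the claim to $\esssup_{(t,s)\in\gD}|U(t,s) - V_n(t,s)|$, where the explicit form of $V_n(t,s)=\e^{-\tau_n\sum_{k=0}^{n-1}q(s+k\tau_n)}$ has been computed just before the theorem statement.

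Next I would turn this scalar difference into the Riemann-sum residual. Set $a(t,s) := \int_s^t q(y)\, dy$ and $b_n(t,s) := \tau_n \sum_{k=0}^{n-1} q(s + k\tau_n)$, so that $U(t,s) = \e^{-a(t,s)}$ and $V_n(t,s) = \e^{-b_n(t,s)}$. Because $q \ge 0$ and $\cI = [0,1]$ forces $0 \le t - s \le 1$, both $a(t,s)$ and $b_n(t,s)$ lie in $[0, \|q\|_\infty]$ for almost every $(t,s)\in\gD$. The mean-value theorem applied to $x \mapsto \e^{-x}$ then gives
$$\bigl|U(t,s) - V_n(t,s)\bigr| \;=\; \e^{-\xi(t,s)}\,\bigl|a(t,s) - b_n(t,s)\bigr|,$$
with some $\xi(t,s) \in [0, \|q\|_\infty]$, hence the pointwise two-sided bound
$$\e^{-\|q\|_\infty}\,\bigl|a(t,s) - b_n(t,s)\bigr| \;\le\; \bigl|U(t,s) - V_n(t,s)\bigr| \;\le\; \bigl|a(t,s) - b_n(t,s)\bigr|$$
holds almost everywhere on $\gD$. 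Taking the essential supremum and feeding the result back into Theorem \ref{th:2.1} sandwiches the operator-norm Trotter error between two fixed positive multiples of $\esssup_{(t,s)\in\gD}|a(t,s)-b_n(t,s)|$, which is precisely the $\gT$-statement of the theorem.

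There is no serious obstacle along this route: the content of the result is really the transfer, through Theorem \ref{th:2.1}, of the Trotter convergence rate to the classical question of how fast Riemann sums of a bounded function approach its Lebesgue integral, uniformly in the endpoints. The only point requiring mild care is the use of $q \ge 0$ to keep the linearising factor $\e^{-\xi(t,s)}$ bounded below by $\e^{-\|q\|_\infty}>0$; without this sign condition one would only recover the upper bound and lose the lower $\gT$-bound, since the exponents $a(t,s),b_n(t,s)$ could otherwise grow in magnitude with $\|q\|_\infty$ and produce cancellation.
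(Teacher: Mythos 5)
Your proposal is correct and takes essentially the same route as the paper: reduction via Theorem \ref{th:2.1} to the scalar quantity $\esssup_{(t,s)\in\gD}|U(t,s)-V_n(t,s)|$ with $U(t,s)=\e^{-\int_s^t dy\, q(y)}$, followed by the two-sided elementary bound $\e^{-\|q\|_{L^\infty}}|x-y|\le|\e^{-x}-\e^{-y}|\le|x-y|$ for $x,y\ge 0$, which the paper invokes directly and you derive via the mean value theorem. No gaps.
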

\begin{proof}
First, by Theorem \ref{th:2.1} and by $U(t, s) = \e^{-\int_s^t dy \, q(y) }$ we obtain
\bea \label{eq:3.01}
\lefteqn{\sup_{\gt \ge 0}\left\|\e^{-\gt(D_0 + Q)} -
\left(\e^{-\gt D_0/n}\e^{-\gt Q/n}\right)^n\right\|_{\cL (L^p(\cl I))}}\\
& &= \esssup_{(t,s)\in \gD}\left|\, \e^{-\int^t_s dy \, q(y)} - \e^{- \tau_n \, \sum_{k=0}^{n-1}
q(s +  \, k \, \tau_n)}\right|\nonumber \ .
\eea
Then, using the inequality
\begin{equation*}
\e^{-\max\{x,y\}}\, |x-y| \le |\, \e^{-x} - \e^{-y}| \le |x-y|, \quad 0 \le x, y \ ,
\end{equation*}
for $0 \leq s < t \leq 1$ one finds the estimates
\bed
\e^{-\|q\|_{L^\infty}} \, R_n(t,s;q) \le \\
\Big|\, \e^{-\int^t_s dy \, q(y)} - \e^{- \tau_n \,
\sum_{k=0}^{n-1} q(s + \, k\, \tau_n)}\Big| \le R_n(t,s;q) \ ,
\eed
where
\begin{equation}\label{eq:3.02}
R_n(t,s,q) := \Big|\int^t_s dy \, q(y) - \tau_n \, \sum_{k=0}^{n-1} q(s + k\, \tau_n) \, \Big| \ ,
\quad (t,s) \in \gD \ , \quad n\in \mathbb{N} \, .
\end{equation}
Hence, for the left-hand side of (\ref{eq:3.01}) we get the estimate
\begin{equation*}
\e^{-\|q\|_{L^\infty}} \ R_n(q) \le \sup_{\gt \ge 0}\left\|\e^{-\gt(D_0 + Q)} - \left(\e^{-\gt D_0/n}
\e^{-\gt Q/n}\right)^n\right\|_{\cL (L^p(\cl I))} \le R_n(q) \ ,
\end{equation*}
where $R_n(q) := \esssup_{(t,s)\in \gD}R_n(t,s;q)$, $n \in \dN$. These estimates together with
definition of $\gT$ prove the assertion.
\hfill $\square$
\end{proof}

Note that by virtue of (\ref{eq:3.02}) and Theorem \ref{th:2.2} the operator-norm
convergence rate of the Trotter product formula for the pair $\{D_0 , Q\}$ coincides with the
convergence rate of the integral \textit{Darboux-Riemann} sum approximation of the \textit{Lebesgue}
integral.

\subsection{\blue{Rate of convergence}} \label{subsec:8.3.3}
First we consider the case of a real H\"older-continuous function $q\in C^{0,\gb}(\cI)$.
\begin{theorem}\label{th:3.1}
If {{$q \in C^{0,\gb}(\cI)$} is non-negative}, then
\begin{equation*}
\sup_{\gt \ge 0}\left\|e^{-\gt \,(D_0 + Q)} -
\left(e^{-\gt \, D_0/n}\, e^{-\gt \, Q/n}\right)^n\right\|_{\cl L(L^p(\cl I))} = O({1}/{n^\gb}) \ ,
\end{equation*}
as $n \to \infty$.
\end{theorem}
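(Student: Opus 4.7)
The plan is to combine Theorem~\ref{th:2.2} with a classical estimate on the error of left-endpoint Riemann-sum approximation of the Lebesgue integral for H\"older-continuous integrands. More precisely, Theorem~\ref{th:2.2} tells us that
\[
\sup_{\tau\ge 0}\big\|\e^{-\tau(D_0+Q)} - (\e^{-\tau D_0/n}\e^{-\tau Q/n})^n\big\|_{\cL(L^p(\cI))}
= \gT\!\left(\esssup_{(t,s)\in\gD} R_n(t,s;q)\right),
\]
with $R_n(t,s;q)$ given by \eqref{eq:3.02}. Hence it suffices to establish the uniform bound $\esssup_{(t,s)\in\gD} R_n(t,s;q) = O(1/n^{\beta})$.

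First, I would rewrite $R_n(t,s;q)$ by splitting the integral over the partition with nodes $s_k := s + k\tau_n$, $k=0,1,\ldots,n$, and $\tau_n = (t-s)/n$:
\[
\int^t_s dy\, q(y) - \tau_n\sum_{k=0}^{n-1} q(s_k)
= \sum_{k=0}^{n-1}\int_{s_k}^{s_{k+1}} dy\,\bigl(q(y)-q(s_k)\bigr).
\]
Next, I would invoke the H\"older hypothesis $q \in C^{0,\gb}(\cI)$ to bound each integrand pointwise: there is a constant $K>0$ such that for every $y \in [s_k,s_{k+1}]$,
\[
|q(y)-q(s_k)| \le K\,|y-s_k|^{\gb} \le K\,\tau_n^{\gb}.
\]
Consequently,
\[
R_n(t,s;q) \le \sum_{k=0}^{n-1}\tau_n\cdot K\tau_n^{\gb} = K\, n\, \tau_n^{1+\gb} = \frac{K\,(t-s)^{1+\gb}}{n^{\gb}}.
\]

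Finally, since $\cI=[0,1]$ and $(t,s) \in \gD$ implies $0 \le t-s \le 1$, the bound is uniform in $(t,s)\in\gD$:
\[
\esssup_{(t,s)\in \gD} R_n(t,s;q) \le \frac{K}{n^{\gb}}.
\]
Substituting this into Theorem~\ref{th:2.2} yields the claimed rate $O(1/n^{\gb})$. There is no real obstacle here: once Theorem~\ref{th:2.2} is in hand, the result reduces to the standard quantitative quadrature estimate for H\"older-continuous integrands, and the only point that deserves a brief comment is the uniformity of the constant $K$ over $(t,s)\in\gD$, which is automatic because $\gD\subset[0,1]^2$ is bounded and $K$ is the global H\"older seminorm of $q$ on $\cI$.
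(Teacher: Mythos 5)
Your proposal is correct and follows essentially the same route as the paper: split the error $R_n(t,s;q)$ over the partition nodes, bound each increment by the H\"older seminorm times $\tau_n^{\beta}$, sum to get $K(t-s)^{1+\beta}/n^{\beta}\le K/n^{\beta}$ uniformly on $\gD$, and conclude via Theorem~\ref{th:2.2}. The only difference is notational (the paper shifts the variable so that the integrals run over $[k\tau_n,(k+1)\tau_n]$), so nothing further is needed.
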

\begin{proof}
One gets for $\tau_n = (t-s)/n$ and $n\in \mathbb{N}$ :
\begin{equation*}
\begin{split}
\int^t_s dy \, q(y) &- \tau_n \, \sum^{n-1}_k q \, (s + {k} \, \tau_n )\\
=& \sum^{n-1}_{k=0}\int^{(k+1)\tau_n}_{{k} \, \tau_n} dy \, \left(q(s+y) -
q(s+ {k}\, \tau_n)\right)\ ,
\end{split}
\end{equation*}
which yields the estimate
\begin{equation*}
\begin{split}
\Big|\int^t_s dy \, q(y) &- \tau_n \, \sum^{n-1}_k q \, (s + {k}\tau_n)\Big|\\
\le& \sum^{n-1}_{k=0}\int^{(k+1)\tau_n}_{{k}\tau_n}
dy \, \left|\, q(s+y) - q(s + {k} \, \tau_n)\right| \ .
\end{split}
\end{equation*}
Since $q \in C^{0,\gb}(\cI)$,  there is a constant $L_\gb > 0$ such that for $y\in[k \tau_n,
(k+1) \tau_n]$ one has
\begin{equation*}
\left|q(s+y) - q(s+ {k}\tau_n) \right| \le L_\gb \ |y- {k}\tau_n |^\gb \le
L_\gb \ \frac{(t-s)^\gb}{n^\gb}\ .
\end{equation*}
Hence, we find
\begin{equation*}
\Big|\int^t_s dy \, q(y) - \tau_n \, \sum^{n-1}_k q(s + {k} \tau_n)\Big|
\le L_\gb \ \frac{(t-s)^{1+\gb}}{n^\gb} \le L_\gb \ \frac{1}{n^\gb} \ ,
\end{equation*}
which proves (cf. Remark \ref{rem:8.3.1})
\begin{equation*}
\esssup_{(t,s)\in \gD}\Big|\int^t_s dy \, q(y) - \tau_n \sum^{n-1}_k q(s + {k}\, \tau_n)\Big|
= O\left(\frac{1}{n^\gb}\right)\, .
\end{equation*}
Applying now Theorem \ref{th:2.2} one completes the proof.
\hfill $\square$
\end{proof}

The next natural question is: what happens, when function $q$ is only \textit{continuous}?
%
\begin{theorem}\label{th:3.2}
If $q: \cl I \rightarrow \C$ is continuous and non-negative, then
\begin{equation}\label{eq:4.10}
\left\|e^{-\gt(D_0 + Q)} - \left(e^{-\gt D_0/n}e^{-\gt Q/n}\right)^n\right\|_{\cl L( L^p(\cl I))}
= o(1) \ ,
\end{equation}
as $n\to \infty$.
\end{theorem}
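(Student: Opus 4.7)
The starting point is Theorem~\ref{th:2.2}: the left-hand side of (\ref{eq:4.10}) is $\gT\bigl(\esssup_{(t,s)\in\gD} R_n(t,s,q)\bigr)$ with $R_n(t,s,q)$ defined in (\ref{eq:3.02}). Thus the task reduces to showing that $R_n(q) := \esssup_{(t,s)\in\gD} R_n(t,s,q) \to 0$ as $n\to\infty$; this is a purely classical question about convergence of Darboux--Riemann sums to the Lebesgue integral, now under the weaker hypothesis of mere continuity (instead of H\"older continuity as in Theorem~\ref{th:3.1}).

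\textbf{Main step.} I would exploit uniform continuity of $q$ on the compact interval $\cI = [0,1]$. Given $\varepsilon > 0$, pick $\gd > 0$ such that $|q(x) - q(y)| < \varepsilon$ whenever $|x-y| < \gd$. For $n > 1/\gd$ and arbitrary $(t,s) \in \gD$ one has $\tau_n = (t-s)/n \le 1/n < \gd$, so, following the decomposition used in the proof of Theorem~\ref{th:3.1} but replacing the H\"older estimate with the uniform-continuity bound, for $y\in[k\tau_n,(k+1)\tau_n]$ we obtain $|q(s+y) - q(s+k\tau_n)| < \varepsilon$ and consequently
\[
R_n(t,s,q) \le \sum_{k=0}^{n-1}\int_{k\tau_n}^{(k+1)\tau_n} \varepsilon \, dy = \varepsilon (t-s) \le \varepsilon.
\]
Taking the $\esssup$ over $(t,s)\in\gD$ gives $R_n(q) \le \varepsilon$ for all sufficiently large $n$; since $\varepsilon > 0$ was arbitrary, $R_n(q) = o(1)$, and Theorem~\ref{th:2.2} yields (\ref{eq:4.10}).

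\textbf{Alternative approach and expected obstacle.} Equivalently, one could argue by density: the Stone--Weierstrass theorem furnishes polynomials $p_\varepsilon$ with $\|q - p_\varepsilon\|_{L^\infty(\cI)} < \varepsilon$, to which Theorem~\ref{th:3.1} applies (polynomials being Lipschitz) producing $R_n(p_\varepsilon) = O(1/n)$; the residue is bounded by $|R_n(t,s,q) - R_n(t,s,p_\varepsilon)| \le 2(t-s)\|q-p_\varepsilon\|_{L^\infty} \le 2\varepsilon$, and a triangle inequality does the rest. Honestly, there is no genuine obstacle here: continuity plus compactness of $\cI$ is enough, and the sole subtlety to monitor is that the bound $\tau_n \le 1/n$ is uniform in $(t,s)\in\gD$, so that the pointwise uniform-continuity bound passes without loss to the $\esssup$ over $\gD$. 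Conversely, no explicit rate of convergence can be expected under mere continuity, since the rate must be governed by the modulus of continuity of $q$, which can be arbitrarily bad; hence the bare $o(1)$ statement is essentially the best one can hope for in this generality.
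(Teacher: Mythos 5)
Your proof is correct and coincides with the paper's own argument: both rely on uniform continuity of $q$ on the compact interval $\cI$, the bound $\tau_n\le 1/n<\gd$ uniform in $(t,s)\in\gD$, the resulting estimate $R_n(t,s,q)\le\varepsilon(t-s)\le\varepsilon$, and an appeal to Theorem~\ref{th:2.2}. The density/Stone--Weierstrass alternative you sketch is a valid variant but is not needed.
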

\begin{proof}
Seeing that $q$ is continuous, for any $\varepsilon > 0$ there is $\gd > 0$ such that for
$y,x \in \cI$ and $|y-x| < \gd$ we have $|q(y) - q(x)| < \varepsilon$.
Therefore, if $1/n < \gd$, then for $y \in ({k} \tau_n, (k+1)\tau_n)$ we have
\begin{equation*}
|q(s+y) - q(s+ {k}\, \tau_n)| < \varepsilon, \quad (t,s) \in \gD \ .
\end{equation*}
Hence,
\begin{equation*}
\Big|\int^t_s dy \, q(y) - \tau_n \sum^{n-1}_k q(s + {k}\,\tau_n)\Big|
\le \varepsilon (t-s) \le \varepsilon \ ,
\end{equation*}
which yields (cf. Remark \ref{rem:8.3.1})
\begin{equation*}
\esssup_{(t,s)\in\gD}\Big|\int^t_s  dy \, q(y) - \tau_n \, \sum^{n-1}_k q(s + {k}\,\tau_n)\Big|
= o(1) \ .
\end{equation*}
Now it remains only to apply Theorem \ref{th:2.2}.
\hfill $\square$
\end{proof}

{\blue{Here it is worth to note that for \textit{general} continuous function $q$ one can say
\textit{nothing} about the convergence rate. Indeed, it can be shown that in (\ref{eq:4.10}) the
convergence to zero can be \textit{arbitrary slow} (\ref{eq:4.11}) for a bounded perturbation $Q$.
This is drastically different to the case, when dominating generator corresponds to a holomorphic
semigroup and perturbation operator is bounded, cf. (\ref{eq:8.2.17}), (\ref{eq:8.2.19}) for
$\alpha = 0 \,$, or to the case of unbounded perturbation, when $0 < \alpha < 1$, see
(\ref{eq:8.2.16}), (\ref{eq:8.2.18}).}}
\begin{theorem}\label{th: 4.3}
Let $\gd_n>0$ be a sequence with $\gd_n \to 0$ as $n \to \infty$. Then there exists a continuous
function $q:\cI = [0,1] \rightarrow \dR$, such that
\be\label{eq:4.11}
\sup_{\gt \ge 0}\left\|e^{-\gt(D_0 + Q)} - \left(e^{-\gt D_0/n}
e^{-\gt Q/n}\right)^n\right\|_{\cl L( L^p(\cl I))} = \go(\gd_n) \, ,
\ee
as $n\to\infty$, where $\omega$ is the \textit{Landau} symbol defined in Remark \emph{\ref{rem:8.3.1}},
see Subsection \emph{\ref{subsec:8.3.1}}.
\end{theorem}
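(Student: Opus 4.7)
The plan is to invoke Theorem~\ref{th:2.2}, which identifies (up to the $\gT$-comparison) the Trotter product formula error for $\{D_0,Q\}$ with the Darboux--Riemann approximation error
\[
R_n(q) = \esssup_{(t,s)\in\gD}\Big|\int_s^t dy\,q(y) - \tau_n\sum_{k=0}^{n-1}q(s+k\,\tau_n)\Big| \, .
\]
So to establish (\ref{eq:4.11}) it suffices to exhibit a continuous non-negative function $q:\cl I \to \dR$ for which $R_n(q) = \go(\gd_n)$, because the $\gT$-comparison then propagates the lower bound to the operator-norm error.

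As building blocks I would use the triangular waves $q_N(x) := \mathrm{dist}(x,\tfrac{1}{N}\dZ)$, which satisfy $\|q_N\|_\infty = 1/(2N)$, $\int_0^1 q_N\,dx = 1/(4N)$, and $q_N\equiv 0$ on the grid $\{i/N\}_{i\in\dZ}$. The key elementary observation concerns scale compatibility on the $n_k$-grid $\{i/n_k\}_{i=0}^{n_k-1}$: writing $i = qm+r$ with $0\le r<m$ and $m = n_k/n_j$, one checks that whenever $n_j\mid n_k$ with \emph{even} quotient $m$, the left Riemann sum of $q_{n_j}$ equals $\int_0^1 q_{n_j}$ exactly (the symmetric sum $\sum_{r=0}^{m-1}\min(r,m-r)=m^2/4$ cancels the denominators perfectly); and whenever $n_k\mid n_j$ (in particular for $j=k$ and for every finer lacunary scale $j>k$), $q_{n_j}$ vanishes identically on the $n_k$-grid, so its Riemann sum is $0$ while $\int_0^1 q_{n_j} = 1/(4n_j)$.

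Given $\gd_n\to 0$, I would pick $n_k\uparrow\infty$ inductively so that each ratio $n_{k+1}/n_k$ is a positive even integer and $\gd_{n_k}\le 1/k^3$ (both achievable, the latter because $\gd_n\to 0$), and set $h_k := 4k\,n_k\,\gd_{n_k}$ and $q := \sum_{k\ge 1} h_k\,q_{n_k}$. The series converges uniformly since $\|h_k q_{n_k}\|_\infty = 2k\gd_{n_k} \le 2/k^2$, so $q$ is continuous and non-negative on $\cl I$. Applying the scale observation term-by-term on the $n_k$-grid, every $j<k$ contributes $0$ to the Riemann-sum error while every $j\ge k$ contributes the non-negative quantity $h_j/(4n_j)$; since all contributions have the same sign, we obtain
\[
R_{n_k}(q) \,\ge\, \sum_{j\ge k}\frac{h_j}{4n_j}\,\ge\, \frac{h_k}{4n_k} = k\,\gd_{n_k}\, .
\]
Hence $R_{n_k}(q)/\gd_{n_k}\ge k\to\infty$, which is $R_n(q)=\go(\gd_n)$, and Theorem~\ref{th:2.2} then delivers (\ref{eq:4.11}).

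The crux of the argument lies in the lacunary arithmetic: without the divisibility $n_j\mid n_k$ with even quotient, different scales could produce cancelling Riemann-sum errors, ruining the lower bound. The chosen tower exactly prevents cross-scale interference, so that contributions from coarser scales vanish while contributions from $j\ge k$ accumulate additively with the same sign. A minor technicality is that $\gD$ excludes $s=0$; but since $q$ is continuous, the map $(t,s)\mapsto R_n(t,s;q)$ is continuous on $\{0\le s\le t\le 1\}$, and taking $(t,s)=(1,\ve)$ with $\ve\downarrow 0$ transfers the lower bound from $(1,0)$ to the $\esssup$ over $\gD$.
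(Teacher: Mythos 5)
Your proposal is correct, but it takes a genuinely different route from the paper. The paper reduces Theorem~\ref{th: 4.3} to Theorem~\ref{th:2.2} exactly as you do, but then simply \emph{cites} the Walsh--Sewell theorem (\cite{WaSe37}, Theorem 6), which already asserts that for any null sequence $\gd_n$ there is a continuous $f$ on $[0,2\pi]$ whose left Riemann sums approximate $\int_0^{2\pi} f$ with error $\go(\gd_n)$; after the change of variable $q(y)=f(2\pi(1-y))$ and the same boundary-point argument (passing from $(1,0)\notin\gD$ to the $\esssup$ over $\gD$ via continuity of $q$), the conclusion follows. You instead \emph{reprove} the Walsh--Sewell-type statement from scratch by an explicit lacunary superposition $q=\sum_k h_k\,q_{n_k}$ of triangular waves, with the tower $n_{k+1}/n_k$ even guaranteeing that coarser scales contribute zero Riemann-sum error on the $n_k$-grid (the identity $\sum_{r=0}^{m-1}\min(r,m-r)=m^2/4$ for even $m$) while scales $j\ge k$ contribute the same-signed quantities $h_j/(4n_j)$, giving $R_{n_k}(q)\ge k\,\gd_{n_k}$. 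All the arithmetic checks out ($\|h_kq_{n_k}\|_\infty=2k\gd_{n_k}\le 2/k^2$ ensures uniform convergence and continuity, and term-by-term interchange is justified by uniform convergence and non-negativity). What the paper's route buys is brevity at the cost of an external citation; what yours buys is a self-contained elementary argument with an explicit witness subsequence and the quantitative lower bound $R_{n_k}(q)\ge k\,\gd_{n_k}$, which is strictly more informative than the bare $\limsup=\infty$ required by the Landau symbol $\go$ of Remark~\ref{rem:8.3.1}. One cosmetic caveat: Theorem~\ref{th:2.2} as stated only gives a $\gT$-comparison, but its proof supplies the pointwise two-sided bound $\e^{-\|q\|_{L^\infty}}R_n(q)\le{}$error${}\le R_n(q)$ for every $n$, which is what your "propagation" step actually uses; it is worth saying so explicitly.
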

\begin{proof}
Taking into account the {\blue{\textit{Walsh-Sewell} theorem}} (\cite{WaSe37}, Theorem 6),
we find that for any sequence $\{\gd_n\}_{n\in\dN}$, $\gd_n > 0$ satisfying
$\lim_{n\to\infty}\gd_n = 0$ there exists a continuous function
$f: [0,2\pi] \longrightarrow \dR$, such that
\begin{equation*}
\left|\int^{2\pi}_0  dx \, f(x)  - \frac{2\pi}{n}\sum^n_{k=1}f(2k\pi/n)\right| = \go(\gd_n) \ ,
\end{equation*}
as $n \to \infty$. Setting $q(y) := f(2\pi(1-y))$ for $y \in [0,1]$, we get a continuous function
$q: [0,1] \longrightarrow \dR$, such that
\begin{equation*}
\left|\int^{1}_0 dy \, q(y)- \frac{1}{n}\sum^{n-1}_{k=0}q(k/n)\right| = \go(\gd_n) \ .
\end{equation*}
Given that function $q$ is continuous, we find
\begin{equation*}
\begin{split}
\esssup_{(t,s)\in \gD}\Big|\int^t_s dy \, q(y)&- \tau_n \, \sum^{n-1}_{n=0} q(s +
k \, \tau_n)\Big|\\
& \ge \Big|\int^{1}_0 dy \, q(y) - \frac{1}{n}\sum^{n-1}_{k=0}q(k/n)\Big| \ ,
\end{split}
\end{equation*}
which yields
\begin{equation*}
\esssup_{(t,s)\in \gD}\Big|\int^t_s dy \, q(y) - \tau_n \sum^{n-1}_{n=0}q(s +
k \, \tau_n)\Big| = \go(\gd_n) \ .
\end{equation*}
Applying now Theorem \ref{th:2.2} we prove \eqref{eq:4.11}.
\hfill $\square$
\end{proof}

Our final comment concerns the case, when function $q:[0,1] \longrightarrow \dR$ is only
\textit{measurable}. Then it can happen that the Trotter product formula for that pair
$\{D_0 , Q\}$ \textit{does not} converge in the \textit{operator-norm} topology.

\begin{theorem}\label{th:4.4}
 There is a non-negative function $q \in L^\infty([0,1])$ such that
 \be\label{eq:4.16}
 \limsup_{n\to\infty}\; \sup_{\gt \ge 0}\left\|e^{-\gt(D_0 + Q)} - \left(e^{-\gt D_0/n}
 e^{-\gt Q/n}\right)^n\right\|_{\cl L( L^p(\cl I))} > 0 \ .
 \ee
\end{theorem}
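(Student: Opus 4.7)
The plan is to apply Theorem \ref{th:2.2}, which identifies the operator-norm distance in \eqref{eq:4.16} with $\gT(R_n(q))$, where
\[
R_n(q) := \esssup_{(t,s) \in \gD}\left|\int^t_s dy\, q(y) - \gt_n \sum_{k=0}^{n-1} q(s + k\, \gt_n)\right|, \qquad \gt_n = (t-s)/n,
\]
and then to exhibit a non-negative $q \in L^\infty([0,1])$ satisfying $\limsup_{n\to\infty} R_n(q) > 0$. The idea is to choose $q = \chi_E$ for a set $E \subset [0,1]$ of positive measure whose complement is a thickened dyadic grid, so that along the subsequence $n = 2^j$ the Darboux--Riemann samples $\{s + k\gt_n\}_{k=0}^{n-1}$ all miss $E$ on a positive-measure set of $(t,s) \in \gD$, while $\int_s^t q$ remains bounded below by an absolute constant.

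Concretely, setting $\epsilon_j := 2^{-2j-3}$, one defines
\[
G_j := \bigcup_{k=0}^{2^j}\bigl(k\, 2^{-j} - \epsilon_j,\ k\, 2^{-j} + \epsilon_j\bigr) \cap [0,1], \quad G := \bigcup_{j \ge 1} G_j, \quad E := [0,1]\setminus G, \quad q := \chi_E.
\]
Then $|G_j| \le 2^{j+1}\epsilon_j = 2^{-j-2}$, so $|G| \le 1/4$ and $|E| \ge 3/4$. The crux is to analyze, for each $n_j := 2^j$, the parameter region
\[
\Omega_j := \bigl\{ (t,s) \in \gD :\ 0 \le s \le \epsilon_j/2,\ \ 1 - \epsilon_j/2 \le t - s \le 1 - s \bigr\},
\]
whose Lebesgue measure in $\gD$ is of order $\epsilon_j^2$. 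A short computation shows that for $(t,s) \in \Omega_j$ and $k \in \{0, \dots, n_j - 1\}$ the shift
\[
\bigl| s + k \gt_{n_j} - k/n_j \bigr| \le \max\bigl(s,\ 1 - (t-s)\bigr) \le \epsilon_j/2 < \epsilon_j
\]
places every sample inside $G_j \subseteq E^c$, so $q(s + k \gt_{n_j}) = 0$ and the Riemann sum vanishes identically on $\Omega_j$, whereas $\int_s^t q(y)\, dy \ge (t-s) - |G| \ge 1 - \epsilon_j/2 - 1/4 \ge 1/2$ for all $j$ large. Hence $R_{n_j}(q) \ge 1/2$, which yields $\limsup_n R_n(q) > 0$ and, via Theorem \ref{th:2.2}, \eqref{eq:4.16}.

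The main obstacle is that the most natural source of badness---namely that $q(k/2^j) = 0$ for every $j, k$ while $\int_0^1 q > 0$---produces a nonzero Riemann-sum error only at the isolated point $(t,s) = (0,1) \in \gD$, whereas Theorem \ref{th:2.2} is driven by the \emph{essential} supremum and is blind to null sets. The role of the $\epsilon_j$-buffering is precisely to spread this point-wise misalignment into a neighbourhood of positive Lebesgue measure $\sim \epsilon_j^2$. The scaling $\epsilon_j \sim 2^{-2j}$ is forced by the competing constraints $\sum_j 2^{j+1}\epsilon_j < 1$ (to keep $|E| > 0$) and $n_j\, \epsilon_j \to 0$ slowly enough that simultaneous $\epsilon_j$-perturbations of all $n_j = 2^j$ sample points still fit inside $G_j$; this calibration is what converts non-Riemann-integrability of $q$ into the \emph{measure-theoretic} failure of convergence required by Theorem \ref{th:2.2}.
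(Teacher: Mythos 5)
Your proposal is correct and follows essentially the same route as the paper's own proof: both take $q$ to be the indicator of the complement of a union of $2^{-2j}$-thickened dyadic grids, show that along $n=2^j$ all Riemann samples fall into the thickened grid for $(t,s)$ in a positive-measure corner region of $\gD$ near $(1,0)$ (so the sum vanishes while $\int_s^t q$ stays bounded below by an absolute constant), and then invoke Theorem \ref{th:2.2}. The only differences are cosmetic choices of constants and your slightly cleaner bound $|s+k\tau_{n_j}-k/n_j|\le\max(s,1-(t-s))$ in place of the paper's $3\varepsilon_n$ estimate.
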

\begin{proof}
Let us introduce open intervals:
\begin{equation*}
\begin{split}
 \gD_{0,n} &:= (0,\, \tfrac{1}{2^{2n+2}}),\\
 \gD_{k,n} &:= (t_{k,n} - \tfrac{1}{2^{2n +2}} \, , \ t_{k,n} + \tfrac{1}{2^{2n +2}}),
 \quad k = 1,2,\ldots,2^n-1,\\
 \gD_{2^n,n}&:= (1-\tfrac{1}{2^{2n+2}}, \, 1),
\end{split}
\end{equation*}
$n \in \dN$, where
\begin{equation*}
t_{k,n} = \frac{k}{2^n}, \quad k = 0,\ldots, n,\quad n \in \dN.
\end{equation*}
Notice that $t_{0,n} = 0$ and $t_{2^n,n} = 1$. One easily checks that the intervals
$\gD_{k,n}$, $k = 0,\ldots,2^n$, are mutually disjoint. We introduce the open sets
\begin{equation*}
\cO_n = \bigcup^{2^n}_{k = 0} \gD_{k,n} \subseteq \cI, \quad n \in \dN.
\end{equation*}
and
\begin{equation*}
\cO = \bigcup_{n\in\dN}\cO_n \subseteq \cI.
\end{equation*}
Then it is clear that
\begin{equation*}
|\cO_n| = \frac{1}{2^{n+1}}, \quad n \in \dN,
\quad \mbox{and} \quad
|\cO| \le \frac{1}{2}.
\end{equation*}
Therefore, the \textit{Lebesgue} measure of the closed set $\cC := \cI\setminus\cO \subseteq \cI$
can be estimated by
\begin{equation*}
|\cC| \ge \frac{1}{2} \ .
\end{equation*}
Using the characteristic function $\chi_{\cC}(\cdot)$ of the set $\cC$ we define
\begin{equation*}
q(t) :=\chi_{\cC}(t), \quad t \in \cI \ .
\end{equation*}
The function $q$ is measurable and it satisfies $0 \le q(t) \le 1$, $t \in \cI$.

Let $\varepsilon \in (0,1)$. We choose $s \in (0,\varepsilon)$ and $t \in (1-\varepsilon,1)$ and
we set
\begin{equation*}
\xi_{k,n}(t,s) := s + k \ \frac{t-s}{2^n}, \quad k = 0,\ldots,2^n-1, \quad n \in \dN,
\quad (t,s) \in \gD.
\end{equation*}
Note that $\xi_{k,n}(t,s) \in (0,1)$, $k = 0,\ldots,2^n-1$, $n \in \dN$. Moreover, we have
\begin{equation*}
t_{k,n} - \xi_{k,n}(t,s) =  k \ \frac{1}{2^n} -s - k \ \frac{t-s}{2^n} = k \ \frac{1-t+s}{2^n} -s \ ,
\end{equation*}
which leads to the estimate
\begin{equation*}
|t_{k,n} - \xi_{k,n}(t,s)| \le \varepsilon \ \,  (1 + {k}/{2^{n-1}}), \quad k =0,\ldots,2^n-1,
\quad n \in \dN \ .
\end{equation*}
Hence
\begin{equation*}
|t_{k,n} - \xi_{k,n}(t,s)| \le 3\varepsilon, \quad k =0,\ldots,2^n-1, \quad n \in \dN.
\end{equation*}
Let $\varepsilon_n := {1}/{(3\cdot 2^{2n+2})}$ for $n \in \dN$. Then we get that
$\xi_{k,n}(t,s) \in \gD_{k,n}$
for $k = 0,\ldots,2^n-1$, $n \in \dN$, $s \in (0,\varepsilon_n)$ and for $t \in (1-\varepsilon_n,1)$.

Now let
\begin{equation*}
S_n(t,s;q) := \tau_n \sum^{n-1}_{k=0}q \, (s + k \, \tau_n) \, ,
\quad (t,s) \in \gD \, , \quad \tau_n = (t-s)/n \, ,  \quad n \in \dN \, .
\end{equation*}
We consider
\begin{equation*}
S_{2^n}(t,s;q) = \frac{t-s}{2^n} \ \sum^{2^n-1}_{k=0}q \, (s + k \, \tfrac{t-s}{2^n}) =
\frac{t-s}{2^n} \ \sum^{2^n-1}_{k=0}q \,(\xi_{k,n}(t,s)) \, ,
\end{equation*}
$n \in \dN$, $(t,s) \in \gD$. If $s \in (0,\varepsilon_n)$ and $t \in (1-\varepsilon_n,1)$,
then $S_{2^n}(t,s;q) = 0$, $n \in \dN$ and
\begin{equation*}
\left|\int^t_s dy \, q(y)  - S_{2^n}(t,s;q)\right| = \int^t_s dy \, q(y) \ , \quad n \in \dN \, ,
\end{equation*}
for $s \in (0,\varepsilon_n)$ and $t \in (1-\varepsilon_n,1)$. In particular, this yields
\bed
\esssup_{(t,s)\in \gD}\left|\int^t_s dy \, q(y)  - S_{2^n}(t,s;q)\right| \ge \esssup_{(t,s)
\in\gD}\int^t_s dy \, q(y) \ge \int_{\cI} dy \, \chi_\cC(y) \ge \frac{1}{2} \ .
\eed
Hence, we obtain
\begin{equation*}
\limsup_{n\to\infty}\;\esssup_{(t,s)\in \gD}\left|\int^t_s dy \, q(y)  - S_{2^n}(t,s;q)\right|
\ge \frac{1}{2},
\end{equation*}
and applying Theorem \ref{th:2.2} we finish the prove of \eqref{eq:4.16}.
\end{proof}
\begin{remark}\label{rem:8.3.2}
\rm{
We note that Theorem \ref{th:4.4} does not exclude the convergence of the Trotter product
formula for the pair $\{D_0 , Q\}$ in the \textit{strong} operator topology.
We would remind that the same kind of \textit{dichotomy} is known for the Trotter product formula
on a Hilbert space, see the \textit{Hiroshi Tamura} example in \cite{Tam00}, Theorem B.
By virtue of (\ref{eq:3.00}) and (\ref{eq:4.16}), Theorem \ref{th:4.4} yields an example of
this {dichotomy} on a Banach space.

Again, there is a drastic difference between the origin of these conclusions in a Hilbert space
(\cite{Tam00}, Theorem B) for \textit{unbounded} perturbation of the \textit{holomorphic} semigroup
and in a Banach space (Theorem \ref{th:4.4}) for \textit{bounded} perturbation of a
(non-holomorphic) \textit{contractive} semigroup.
}
\end{remark}
\section{{\textbf{Notes}}} \label{sec:8.4}
{Notes to Section~\ref{sec:8.1}}.
Characterisation of \textit{holomorphic contraction} semigroups at the end of
Subsection \ref{subsec:8.1.2} (iv), is due to Corollary II.4.9 \cite{EN00}.
For the proof of Lemma \ref{lem:8.1.11} see, for example \cite{Tan75}, Lemma 2.3.5.

\bigskip
\noindent
{Notes to Section~\ref{sec:8.2}}.
Here we extend to the \textit{operator-norm} convergence
of the product formula on a Banach space for perturbation
$B$ with a relative \textit{zero} $A$-bound for holomorphic semigroup $\{\e^{-tA}\}_{t\geq 0}$
some of the Trotter-Chernoff results, cf. \cite{Trot59}, \cite{But20}, \cite{Zag20}.
This shows that hypothesis of self-adjointness in the case of a Hilbert space \cite{IT97}
has only a technical importance.

On the other hand the operator-norm topology is "natural" for holomorphic $C_0$-semigroups, which
may lead one to think that it is an essential hypothesis for the operator-norm convergence
of the Trotter product formula. In Section \ref{sec:8.3} we showed that this hypothesis is also
technical, but we have to assume
contraction of semigroup $\{\e^{-tA}\}_{t\geq 0}$. We would like to remark that demand of
contraction is not as superfluous as one could suppose. For demonstration we address the reader
to instructive example by Trotter \cite{Trot59}, where it is called "the norm condition".

This section contains a revision of result \cite{CZ01}, Section 3,
where the \textit{operator-norm} convergence of the Trotter product formula on a Banach space
$\mathfrak{X}$ has been proven (up to our knowledge) for the first time.
For a survey of similar results in this direction see \cite{NSZ18b}.


\bigskip
\noindent
{Notes to Section~\ref{sec:8.3}}.
In contrast to Section \ref{sec:8.2}, where \textit{operator-norm} convergence holds true if the
dominating operator $A \in \mathscr{H}_{c}(\theta, 0)$ generates a \textit{holomorphic}
contraction semigroup and operator $B$ is a
$A$-infinitesimally small generator of a \textit{contraction} semigroup (in particular,
if $B$ is a \textit{bounded} operator), we present \textit{Example} that this is also possible if
condition on generator $A$ is \textit{relaxed}. The conditions are \cite{NSZ18a}: \\
(1) Operator $A = \cK_0$ generates a \textit{contractive} (not holomorphic!) semigroup.\\
(2) $B = \cB$ is a \textit{bounded} operator.

There it is also demonstrated that the operator-norm convergence generally \textit{fails}
(even for bounded operators $\cB$) if \textit{unbounded} $\cK_0$ is \textit{not} a \textit{holomorphic}
generator and that operator-norm convergence of the Trotter product formula can be
\textit{arbitrary slow}, cf. {Subsection} \ref{subsec:8.3.3}. This is again very different to
the holomorphic case: $A \in \mathscr{H}_{c}(\theta, 0)$ (cf. Subsection \ref{subsec:8.2.2}),
where the rate of the operator-norm convergence is of the order $O({(\ln n)^2}/{n})$ for
\textit{any} bounded perturbation $B$ ($\alpha =0$), see Theorems \ref{th:8.2.6}, \ref{th:8.2.7},
and Corollary \ref{cor:8.2.8}.



\end{document}